\newtheorem{theorem}{Theorem}[section]
\newtheorem{proposition}[theorem]{Proposition}
\newtheorem{lemma}[theorem]{Lemma}
\newtheorem{corollary}[theorem]{Corollary}
\theoremstyle{definition}
\newtheorem{definition}[theorem]{Definition}
\newtheorem{remark}[theorem]{Remark}
\begin{document}

\author[A. Moussavi]{Ahmad Moussavi}
\address{Department of Mathematics, Tarbiat Modares University, 14115-111 Tehran Jalal AleAhmad Nasr, Iran}
\email{moussavi.a@modares.ac.ir; moussavi.a@gmail.com}

\author[P. Danchev]{Peter Danchev}
\address{Institute of Mathematics and Informatics, Bulgarian Academy of Sciences, 1113 Sofia, Bulgaria}
\email{danchev@math.bas.bg; pvdanchev@yahoo.com}

\author[A. Javan]{Arash Javan}
\address{Department of Mathematics, Tarbiat Modares University, 14115-111 Tehran Jalal AleAhmad Nasr, Iran}
\email{a.darajavan@modares.ac.ir; a.darajavan@gmail.com}

\author[O. Hasanzadeh]{Omid Hasanzadeh}
\address{Department of Mathematics, Tarbiat Modares University, 14115-111 Tehran Jalal AleAhmad Nasr, Iran}
\email{o.hasanzade@modares.ac.ir; hasanzadeomiid@gmail.com}

\title{A New Characterization of Semi-Tripotent Rings}
\keywords{idempotent; tripotent; strongly nil-clean ring; boolean ring; semi-boolean ring; semi-tripotent ring}
\subjclass[2020]{16N40, 16S50, 16U99}

\maketitle




\begin{abstract} We give a comprehensive study of the so-called {\it semi-tripotent rings} obtaining their new and non-trivial characterization as well as a complete description in terms of sums and products of some special elements. Particularly, we explore in-depth when a group ring is semi-tripotent. Our results somewhat supply those established by Ko\c{s}an et al. in Can. Math. Bull. (2019).
\end{abstract}

\section{Introduction and Fundamentals}

Throughout this paper, all rings are assumed to be unital and associative. Almost all symbols and terminology are standard being consistent with the well-known books \cite{lamf} and \cite{lame}. Thus, the Jacobson radical, the lower nil-radical, the set of nilpotent elements, the set of idempotent elements, the set of tripotent elements and the set of units of \( R \) are, respectively, denoted by \( J(R) \), \( \text{Nil}_{*}(R) \), \( \text{Nil}(R) \), \( \text{Id}(R) \), \( \text{Tr}(R) \) and \( U(R) \).

The core focus of the present investigation is the set
\begin{align*}
  J(R) \subseteq \Delta(R) &= \{ x \in R : x + u \in U(R) \text{ for all } u \in U(R) \} \\
                           &= \{ x \in R : 1 - xu \text{ is invertible for all } u \in U(R) \} \\
                           &= \{ x \in R : 1 - ux \text{ is invertible for all } u \in U(R) \},
\end{align*}
which was examined by Lam \cite[Exercise 4.24]{lame} and more recently by Leroy-Matczuk \cite{lmr}. The authors in \cite[Theorems 3 and 6]{lmr} indicate that \( \Delta(R) \) represents the largest Jacobson radical subring of \( R \) that remains closed under multiplication by all units (or even by all quasi-invertible elements) of \( R \).

Historically, in ring theory, strongly nil-clean rings possess a significant importance: a ring \(R\) is called \textit{strongly nil-clean} if every element of \(R\) can be expressed as the sum of an idempotent and a nilpotent element that commute with each other (see \cite{chenb, diesl, kzw}). These rings were completely classified as rings which are boolean modulo their nil Jacobson radical (see, e.g., \cite{dl} and \cite{kwz}).

Furthermore, Chen and Sheibani in \cite{css} generalized this concept and introduced strongly 2-nil-clean rings: a ring \(R\) is said to be \textit{strongly 2-nil-clean} if every element of \(R\) can be written as the sum of a tripotent element (that is, an element \(x\) such that \(x^3 = x\)) and a nilpotent element that commute with each other.

On the other hand, \textit{strongly J-clean} rings are those rings in which every element can be written as the sum of an idempotent and an element from the Jacobson radical \(J(R)\) that commute with each other (see \cite{csj, chensjc}). A relevant version of this is considered in \cite{cui} by defining so-named {\it 2-UJ rings} that are those rings for which the square of each unit is a sum of an idempotent and an element from the Jacobson radical.

Later on, combining these two notions, Ko\c{s}an et al. in \cite{kyzr} defined the so-termed \textit{semi-tripotent} rings in which each element is the sum of a tripotent element and an element from \(J(R)\).

Inspired by all of this, we call a ring $R$ to be {\it $\Delta$-tripotent}, or just a {\it DT ring} for short, provided every element of $R$ is a sum of an element from $\Delta(R)$ and of element from $\text{Tr}(R)$, i.e., $$R=\Delta(R)+\text{Tr}(R).$$

\noindent We are motivating to demonstrate in the next section the curious non-elementary equivalence between the classes of \textit{DT rings} and \textit{semi-tripotent rings} (see Corollary~\ref{gdt iff semi tripotent}). Although the structure of semi-tripotent group rings has been elegantly described in \cite{kyzr}, our approach differs from the methods used there. For this: (1) we are focusing on element-based proofs (rather than the ring-theoretic structures); (2) we are providing more optimized formulations of certain structures; (3) we are basing all arguments on the properties of $\Delta(RG)$ -- in fact, given that $\Delta(R)$ is a relatively new set with unexplored in detail structures (unlike $J(R)$), this section examines key features of $\Delta(RG)$ and refines existing frameworks.

\medskip

Our principal results are planned to be proved in Theorems~\ref{p in Delta and p-group}, \ref{p neq 2 and 3}, \ref{G is 2-group}, \ref{G is 3-group or elementary 2-group} and \ref{2 in Delta RG is GDT} as well as Theorems~\ref{boolean and yaqub}, \ref{avali T GDT}, \ref{maj3} and \ref{maj4}, respectively.

\section{Basic Properties}

We begin here with the following preliminaries. The first tool is our key notion as formulated above.

\begin{definition}
A ring $R$ is called {\it $\Delta$-tripotent} or just a \textit{DT ring} for short if, for every $r \in R$, there exist $e \in \operatorname{Tr}(R)$ and $d \in \Delta(R)$ such that $r = e + d$.
\end{definition}

The following lemma can easily be proven, so we leave it to the interested reader.

\begin{lemma}\label{lemma 0} The following two assertions are true:
(1) Suppose \( R = \prod_{i \in I} R_i \). Then, \( R \) is a DT ring if, and only if, for each \( i \in I \), \( R_i \) is a DT ring.

(2) Suppose \( R \) is a ring and \( I \) is an ideal of \( R \) such that \( I \subseteq J(R) \). Then, \( R/I \) is a DT ring.
\end{lemma}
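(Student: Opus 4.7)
The plan is to handle the two parts separately: part (1) by reducing to a coordinatewise analysis of both $\operatorname{Tr}$ and $\Delta$, and part (2) by exploiting the unit-lifting property of ideals contained in $J(R)$.

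For part (1), my first step would be to record the two building-block identities $\operatorname{Tr}\bigl(\prod_{i\in I} R_i\bigr) = \prod_{i\in I}\operatorname{Tr}(R_i)$ and $\Delta\bigl(\prod_{i\in I} R_i\bigr) = \prod_{i\in I}\Delta(R_i)$. The first is immediate from $(r_i)^3 = (r_i^3)$. For the second, I would invoke the fact that units in a direct product are precisely tuples of units in the factors; consequently $x = (x_i)$ satisfies $x+u \in U(R)$ for every $u \in U(R)$ if and only if $x_i + u_i \in U(R_i)$ for every $i$ and every $u_i \in U(R_i)$. Once these identifications are in hand, both directions of (1) are formal: a global decomposition $(r_i) = (e_i) + (d_i)$ corresponds exactly to a coordinatewise family of decompositions $r_i = e_i + d_i$ in the $R_i$.

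For part (2), which I read (the statement as written requires this hypothesis) as saying that if $R$ is a DT ring then so is $R/I$, I would work with the canonical projection $\pi\colon R \to R/I$. Since $\pi$ respects sums and powers, tripotents go to tripotents, so the only point with real content is the inclusion $\pi(\Delta(R)) \subseteq \Delta(R/I)$. Here I would invoke the classical fact that whenever $I \subseteq J(R)$ the map $U(R) \twoheadrightarrow U(R/I)$ is surjective: given $d \in \Delta(R)$ and any $\bar v \in U(R/I)$, pick a lift $v \in U(R)$; then $d + v \in U(R)$, and hence $\pi(d) + \bar v = \pi(d+v) \in U(R/I)$. Consequently, a decomposition $r = e + d$ in $R$ projects to the required decomposition $\bar r = \bar e + \bar d$ in $R/I$.

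Both parts are essentially bookkeeping once the compatibility of $\Delta$ with products and with quotients modulo $J$-contained ideals is established; the only mildly non-formal ingredient is the unit-lifting step, and since that is a standard consequence of $I \subseteq J(R)$, I anticipate no serious obstacle, which is no doubt why the authors are content to leave the verification to the reader.
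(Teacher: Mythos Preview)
The paper omits the proof entirely, stating only that it ``can easily be proven'' and leaving it to the reader; your argument is exactly the routine verification the authors intend, and it is correct. Your reading of part (2) as requiring the hypothesis that $R$ itself be DT is right (and is how the lemma is applied later in the proof of Theorem~\ref{boolean and yaqub}); with that hypothesis in place, the unit-lifting step via $I \subseteq J(R)$ gives $\pi(\Delta(R)) \subseteq \Delta(R/I)$ just as you say.
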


The next two statements are pivotal.

\begin{lemma}\label{prop d,e}
Let $R$ be a ring. Then, the following three conditions hold:
\begin{enumerate}
    \item $(f \pm f^2)d$ and $d(f \pm f^2) \in \Delta(R)$ for every $f \in \operatorname{Tr}(R)$ and $d \in \Delta(R)$.
    \item $2ed \in \Delta(R)$ and $2de \in \Delta(R)$ for every $e \in \operatorname{Id}(R)$ and $d \in \Delta(R)$.
    \item $2fd \in \Delta(R)$ and $2df \in \Delta(R)$ for every $f \in \operatorname{Tr}(R)$ and $d \in \Delta(R)$.
\end{enumerate}
\end{lemma}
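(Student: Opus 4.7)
The plan is to prove part (1) first, from which parts (2) and (3) will fall out in one line each.

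For (1), my goal is to rewrite $f \pm f^2$ as $1$ plus a unit, so that $(f \pm f^2) d$ decomposes as a sum of two elements of $\Delta(R)$. The key identity is $(f + f^2 - 1)^2 = 1$, which follows from the tripotency relation $f^3 = f$ (hence $f^4 = f^2$) in a two-line expansion; symmetrically one gets $(f - f^2 + 1)^2 = 1$. Thus $u := f + f^2 - 1$ and $u' := f - f^2 + 1$ are self-inverse units of $R$, and so $f + f^2 = 1 + u$ and $f - f^2 = u' - 1$. Multiplying on the right by $d$ then yields, for example, $(f + f^2) d = d + u d$. Since $\Delta(R)$ is a subring of $R$ that is closed under multiplication by units (by the results of Leroy--Matczuk recalled in the introduction), both summands lie in $\Delta(R)$, whence so does their sum. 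The same manipulation handles $d(f+f^2)$, $(f-f^2)d$, and $d(f-f^2)$.

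Parts (2) and (3) should then be immediate from (1). For (2), an idempotent $e$ is in particular tripotent and satisfies $e + e^2 = 2e$, so applying (1) with $f := e$ gives $2ed, 2de \in \Delta(R)$. For (3), given a tripotent $f$, adding the two conclusions of (1) yields $(f+f^2)d + (f-f^2)d = 2fd \in \Delta(R)$ directly from the subring property of $\Delta(R)$, and likewise on the right.

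The only step that is not routine is spotting the self-inverse units $f + f^2 - 1$ and $f - f^2 + 1$; after that, the argument uses only that $\Delta(R)$ is a subring of $R$ closed under multiplication by units of $R$, so no further machinery is required. In particular, this approach sidesteps the temptation to form the idempotents $\tfrac{f \pm f^2}{2}$, which would presume that $2$ is invertible in $R$; the self-inverse unit trick works over an arbitrary ring.
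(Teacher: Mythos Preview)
Your proof is correct and essentially identical to the paper's own argument: the paper also observes that $(1-f^2)\pm f$ are self-inverse units (the same elements as your $u$ and $u'$ up to sign), multiplies by $d$, and then uses closure of $\Delta(R)$ under addition together with $d\in\Delta(R)$ to obtain $(f\pm f^2)d,\,d(f\pm f^2)\in\Delta(R)$. Parts (2) and (3) are likewise derived from (1) in the paper, just as you do.
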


\begin{proof}
We prove only (1), because (2) and (3) follow from (1). So, assuming $f \in \operatorname{Tr}(R)$ and $d \in \Delta(R)$, we write
\[
((1 - f^2) - f)((1 - f^2) - f) = 1 = ((1 - f^2) + f)((1 - f^2) + f),
\]
so, in view of \cite[Lemma 1(2)]{lmr}, we have that all of the elements $((1 - f^2) - f)d$, $d((1 - f^2) - f)$, $((1 - f^2) + f)d$ and $d((1 - f^2) + f)$ are in $\Delta(R)$. Since $\Delta(R)$ is known to be closed under addition, it follows at once that both $(f \pm f^2)d$ and $d(f \pm f^2)$ lie in $\Delta(R)$, as required.
\end{proof}

\begin{lemma}\label{power a then a}
Let $R$ be a DT ring such that $a^2 \in \Delta(R)$. Then, $a \in \Delta(R)$.
\end{lemma}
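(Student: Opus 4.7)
The plan is to write $a = e + d$ using the DT hypothesis (with $e^3 = e$ and $d \in \Delta(R)$) and then show $e = 0$, so that $a = d \in \Delta(R)$ at once.

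First, I would extract units from $a^2 \in \Delta(R)$: applying the definition of $\Delta(R)$ with the unit $-1$ shows $1 - a^2 \in U(R)$. I would then factor $(1-a)(1+a) = (1+a)(1-a) = 1 - a^2$ and invoke the standard fact that if both products $xy$ and $yx$ are units, then $x$ and $y$ each admit both a left and a right inverse and so are themselves units. This yields $1 - a,\, 1 + a \in U(R)$. To transfer this to $e$, I would write $1 - e = (1 - a) + d$ and $1 + e = (1 + a) - d$; since both $d$ and $-d$ lie in $\Delta(R)$ and $U(R) + \Delta(R) \subseteq U(R)$ directly from the definition of $\Delta(R)$, I would obtain $1 \pm e \in U(R)$.

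The key step would then be to use the tripotent identity $e^3 = e$ in the form
\[
(1-e)(e + e^2) = e - e^3 = 0 \quad \text{and} \quad (1+e)(e^2 - e) = e^3 - e = 0.
\]
Multiplying on the left by $(1 - e)^{-1}$ and $(1 + e)^{-1}$ respectively cancels the units and forces $e^2 = -e$ and $e^2 = e$. Combining these gives $2e = 0$ and $e^2 = e$, whence $(1 - e)^2 = 1 - 2e + e^2 = 1 - e$. Thus $1 - e$ is simultaneously a unit and an idempotent, which forces $1 - e = 1$ and so $e = 0$.

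I anticipate the only real subtlety to be the first step: deducing that \emph{both} $1 - a$ and $1 + a$ are two-sided units from the single relation $1 - a^2 \in U(R)$. In a noncommutative ring, a product being a unit does not automatically promote its factors, but here the two factorizations of $1 - a^2$ coincide, so each of $1 \pm a$ inherits both a left and a right inverse. After that, the closure $\Delta(R) + U(R) \subseteq U(R)$ and the tripotent identity $e^3 = e$ carry the argument to its conclusion essentially without further computation; notably, Lemma~\ref{prop d,e} is not needed for this approach.
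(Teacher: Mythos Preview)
Your argument is correct and takes a genuinely different route from the paper's proof. Both proofs start by writing $a = e + d$ with $e \in \operatorname{Tr}(R)$, $d \in \Delta(R)$, and both aim to show $e = 0$; the divergence is in how this is achieved. The paper, after noting $1 + a \in U(R)$, invokes closure of $\Delta(R)$ under multiplication by units (citing \cite[Lemma 1(2)]{lmr}) to push $(1+a)d$ and $d(1+a)$ into $\Delta(R)$, then computes $ae, ea \in \Delta(R)$ and expands $d^2 = (a-e)^2$ to force $e^2 \in \Delta(R) \cap \operatorname{Id}(R) = \{0\}$, whence $e = e^3 = 0$. You instead extract \emph{both} units $1 \pm a$ from the commuting factorization of $1 - a^2$, transfer them to $1 \pm e \in U(R)$ via $U(R) + \Delta(R) \subseteq U(R)$, and then let the tripotent identities $(1 \mp e)(e^2 \pm e) = 0$ do the work directly. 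Your path is more self-contained: it uses only the bare definition of $\Delta(R)$ and elementary tripotent algebra, avoiding both the cited closure property and any $\Delta$-subring bookkeeping; as you note, Lemma~\ref{prop d,e} is not needed. The paper's approach, on the other hand, isolates the intermediate facts $ae, ea \in \Delta(R)$, which have some independent interest. One minor streamlining: once you have $e^2 = e$, the element $1 - e$ is already idempotent, so the detour through $2e = 0$ and $(1-e)^2$ is unnecessary; you can conclude $1 - e \in U(R) \cap \operatorname{Id}(R) = \{1\}$ immediately.
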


\begin{proof}
Since $a^2 \in \Delta(R)$, we have $1 - a^2 \in U(R)$. On the other side, since $(1 + a)(1 - a) = 1 - a^2 \in U(R)$, it follows that $1 + a \in U(R)$. Therefore, \cite[Lemma 1(2)]{lmr} employs to find that $(1 + a)d, d(1 + a) \in \Delta(R)$.

Assume now that $a = e + d$ is a DT representation. Thus, we write:
\[
ae = a^2 + d - (a + 1)d \in \Delta(R),
\]
\[
ea = a^2 + d - d(a + 1) \in \Delta(R).
\]
But, since $a = e + d$, we deduce $d^2 = (a - e)^2 = a^2 - ae - ea + e$ implying that
\[
e = d^2 - a^2 + ae + ea \in \Delta(R).
\]
Consequently, $e^2 \in \Delta(R) \cap \operatorname{Id}(R) = \{0\}$, and hence
\[
e = e^2 e = 0.
\]
This shows that $a = d \in \Delta(R)$, as needed.
\end{proof}

As an immediate consequence, we yield.

\begin{corollary}
Let $R$ be a DT ring. Then, $\operatorname{Nil}(R) \subseteq \Delta(R)$.
\end{corollary}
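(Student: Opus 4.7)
The plan is to reduce the statement to a repeated application of Lemma~\ref{power a then a}. Take an arbitrary $a \in \operatorname{Nil}(R)$, and let $n$ be its index of nilpotency, so $a^n = 0$. Choose $k \in \mathbb{N}$ large enough that $2^k \geq n$; then $a^{2^k} = 0$.

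Since $0 \in \Delta(R)$ (this is immediate from the definition), we have $a^{2^k} \in \Delta(R)$. Rewriting this as $\bigl(a^{2^{k-1}}\bigr)^2 \in \Delta(R)$, Lemma~\ref{power a then a} applies (noting that the hypothesis of Lemma~\ref{power a then a} is only that $R$ be a DT ring, which is given) to give $a^{2^{k-1}} \in \Delta(R)$. Iterating this descent on the exponent $k$, after finitely many steps we conclude $a \in \Delta(R)$.

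There is really no obstacle here: the only thing to verify is the base case $0 \in \Delta(R)$, which is trivial, and the fact that Lemma~\ref{power a then a} may be applied repeatedly without any further assumption on $a$ beyond $a^2 \in \Delta(R)$. Thus the corollary follows by a short induction on $k$, where $2^k$ bounds the nilpotency index of $a$.
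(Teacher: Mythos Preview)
Your proof is correct and is exactly the argument the paper has in mind: the corollary is stated there as an ``immediate consequence'' of Lemma~\ref{power a then a} with no explicit proof, and your repeated-halving descent on the exponent is the natural way to spell that out.
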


We proceed by proving the following claims that are our basic instruments.

\begin{lemma}
Let $R$ be a DT ring. Then, the following two conditions are valid:
\begin{enumerate}
    \item $er - re \in \Delta(R)$ for every $e \in \operatorname{Id}(R)$ and $r \in R$.
    \item $fd \pm df \in \Delta(R)$ for every $f \in \operatorname{Tr}(R)$ and $d \in \Delta(R)$.
\end{enumerate}
\end{lemma}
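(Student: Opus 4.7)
The plan for part~(1) is to exhibit the commutator $er - re$ as a difference of two visibly nilpotent elements, and then to invoke the corollary $\operatorname{Nil}(R)\subseteq \Delta(R)$ established immediately above. The starting identity, valid in any ring, is
\[
er - re \;=\; er(1-e) \;-\; (1-e)\,r\,e,
\]
which one checks by expanding the right-hand side and cancelling the two copies of $ere$. Each summand then squares to $0$ because $(1-e)e = 0$; for instance,
\[
\bigl(er(1-e)\bigr)^{2} \;=\; er\bigl((1-e)e\bigr)r(1-e) \;=\; 0,
\]
and symmetrically for $(1-e)re$. Therefore both summands belong to $\operatorname{Nil}(R) \subseteq \Delta(R)$, and so does their difference $er-re$. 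This disposes of (1) with essentially no computation.

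For part~(2), the plan is to feed the idempotent $f^{2}$ into part~(1) and then rearrange via Lemma~\ref{prop d,e}. Since $f\in\operatorname{Tr}(R)$ one has $(f^{2})^{2} = f^{4} = f\cdot f^{3} = f^{2}$, so $f^{2}\in\operatorname{Id}(R)$; applying part~(1) with this idempotent and with $r:=d$ gives $f^{2}d - df^{2}\in\Delta(R)$. Lemma~\ref{prop d,e}(2) supplies $2f^{2}d\in\Delta(R)$, and hence
\[
f^{2}d + df^{2} \;=\; 2f^{2}d \;-\;(f^{2}d - df^{2}) \;\in\;\Delta(R).
\]
Next, Lemma~\ref{prop d,e}(1) provides $(f - f^{2})d$ and $d(f - f^{2})$ in $\Delta(R)$, so that
\[
(fd \pm df) - (f^{2}d \pm df^{2}) \;=\; (f - f^{2})d \pm d(f - f^{2}) \;\in\;\Delta(R).
\]
Combining with the $\Delta(R)$-membership of $f^{2}d \pm df^{2}$ just derived yields $fd \pm df\in \Delta(R)$, completing (2).

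The main obstacle is conceptual rather than computational: spotting the Peirce-style identity in part~(1) that rewrites the commutator as a sum of square-zero pieces, so that the newly minted corollary $\operatorname{Nil}(R)\subseteq \Delta(R)$ applies verbatim without needing Lemma~\ref{power a then a} or any further manipulation with the units $2e-1$ or $1+f-f^{2}$. Once part~(1) is secured this way, part~(2) reduces to a short arithmetic deduction using Lemma~\ref{prop d,e}.
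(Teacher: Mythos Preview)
Your proof is correct and follows essentially the same approach as the paper: for part~(1) both you and the paper split $er-re = er(1-e) - (1-e)re$ into square-zero pieces (the paper cites Lemma~\ref{power a then a} directly while you cite its corollary $\operatorname{Nil}(R)\subseteq\Delta(R)$, which amounts to the same thing), and for part~(2) both arguments apply part~(1) to the idempotent $f^{2}$ and then rearrange using Lemma~\ref{prop d,e}. The only cosmetic difference is that the paper expresses $fd\pm df$ via $(f+f^{2})d$ and $d(f\pm f^{2})$, whereas you route through $(f-f^{2})d$, $d(f-f^{2})$, and the extra observation $f^{2}d+df^{2}=2f^{2}d-(f^{2}d-df^{2})\in\Delta(R)$; the ingredients and the overall strategy are the same.
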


\begin{proof}
(1) Let $e \in \operatorname{Id}(R)$ and $r \in R$. Then, we have:
\[
[er(1-e)]^2 = 0 = [(1-e)re]^2.
\]
Exploiting Lemma \ref{power a then a}, we obtain:
\[
er(1-e) \in \Delta(R) \implies er - ere \in \Delta(R),
\]
\[
(1-e)re \in \Delta(R) \implies re - ere \in \Delta(R).
\]
Since $\Delta(R)$ is known to be closed under addition, we derive:
\[
er - re = (er - ere) - (re - ere) \in \Delta(R).
\]

(2) Let $f \in \operatorname{Tr}(R)$ and $d \in \Delta(R)$. Since $f^2 \in \operatorname{Id}(R)$, owing to (1), we write $f^2d - df^2 \in \Delta(R)$. Thus, with the aid of Lemma \ref{prop d,e}(1), we arrive at:
\[
fd - df = fd + f^2d - f^2d - df - df^2 + df^2 = (f + f^2)d + (df^2 - f^2d) - d(f + f^2) \in \Delta(R).
\]
Similarly, we obtain:
\[
fd + df = fd + f^2d - f^2d + df - df^2 + df^2 = (f + f^2)d + (df^2 - f^2d) - d(f - f^2) \in \Delta(R),
\]
as expected.
\end{proof}

\begin{proposition}\label{6 in Delta}
Let $R$ be a DT ring. Then, $6 \in \Delta(R)$.
\end{proposition}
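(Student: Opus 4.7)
The approach I would take is to apply the DT decomposition to the single element $2 \in R$, exploiting the fact that $2 = 1 + 1$ lies in the centre of $R$. Writing $2 = f + d$ with $f \in \operatorname{Tr}(R)$ and $d \in \Delta(R)$, centrality of $2$ forces $f = 2 - d$ to commute with $d$, so that the tripotent identity $f^{3}=f$ becomes a genuine polynomial identity in the single commuting variable $d$, namely $(2-d)^{3} = 2-d$.

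Expanding this via the ordinary binomial theorem and solving for the constant term yields an identity of the form $6 = p(d)$ with $p(x) \in \mathbb{Z}[x]$ of zero constant term; the short calculation gives $6 = d^{3} - 6d^{2} + 11d$. To close the argument I would invoke the Leroy--Matczuk theorem recalled in the introduction: $\Delta(R)$ is a Jacobson radical subring of $R$ and is therefore closed both under addition and under multiplication with itself. Hence $d,\, d^{2},\, d^{3} \in \Delta(R)$, and their $\mathbb{Z}$-linear combination $d^{3} - 6d^{2} + 11d$ lies in $\Delta(R)$, giving $6 \in \Delta(R)$.

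I do not anticipate any serious obstacle. The only step beyond a routine binomial expansion is the appeal to closure of $\Delta(R)$ under its own multiplication, which is precisely the structural statement of Leroy--Matczuk. If one preferred to avoid that structural input and stay within the element-level machinery of the excerpt, one could instead prove $d^{k} \in \Delta(R)$ for each $k \geq 1$ directly by iterating Lemma~\ref{power a then a} together with the pseudo-commutator identities of Lemma~\ref{prop d,e}; but the route above is considerably shorter and is the one I would adopt.
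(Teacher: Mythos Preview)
Your argument is correct. Writing $2 = f + d$ with $f$ tripotent and $d \in \Delta(R)$, the centrality of $2$ indeed forces $f = 2-d$ to commute with $d$, so $(2-d)^3 = 2-d$ expands in the commutative subring $\mathbb{Z}[d]$ to $6 = d^{3} - 6d^{2} + 11d$. Since $\Delta(R)$ is a (Jacobson radical) subring by Leroy--Matczuk, it is closed under addition and internal products, and the right-hand side lies in $\Delta(R)$.

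This is a genuinely different route from the paper's proof. The paper also starts from $2 = e + d$, but rather than cubing it squares repeatedly, using Lemma~\ref{prop d,e} at each stage to push cross-terms like $2ed$ and $(e^{2}-e)d_{2}$ into $\Delta(R)$. After building a chain of auxiliary elements $d_{1}, d_{2}, d_{3}, d_{4} \in \Delta(R)$, it manufactures the decomposition $6 = d_{4} + d_{2}$. Your approach is shorter and more transparent: by exploiting centrality of $2$ from the outset, the whole computation collapses to a single polynomial identity, and the structural input from Leroy--Matczuk (that $\Delta(R)$ is multiplicatively closed) replaces several invocations of Lemma~\ref{prop d,e}. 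The paper's argument, by contrast, stays closer to the element-level lemmas developed earlier and does not name the subring property explicitly (though it tacitly uses $d^{2} \in \Delta(R)$); this makes it more self-contained within the paper's own toolkit but at the cost of extra bookkeeping.
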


\begin{proof}
Letting $2 = e + d$ be a DT representation, we can write $4 = 2^2 = e^2 + 2ed + d^2$. Set $d_1 := 2ed + d^2$. Consulting with Lemma \ref{prop d,e}, we have $d_1 \in \Delta(R)$. Therefore, it must be that
\[
2 = 4 - 2 = (e^2 - e) + (d_1 - d).
\]
Put $d_2 := d_1 - d$. Since $\Delta(R)$ is closed under addition, we get $d_2 \in \Delta(R)$. So, we receive that
\[
4 = 2^2 = 2(e^2 - e) + 2(e^2 - e)d_2 + d_2^2.
\]
Again viewing Lemma \ref{prop d,e}, we have $d_3 = 2(e^2 - e)d_2 + d_2^2 \in \Delta(R)$. Moreover, since $2 = e + d$, we have:
\[
4 = 2(e^2 - e) + d_3 = (e + d)(e^2 - e) + d_3 = (e - e^2) + d(e^2 - e) + d_3.
\]
Thus, $d_4 = d(e^2 - e) + d_3 \in \Delta(R)$ and, therefore,
\[
6 = 4 + 2 = [(e - e^2) + d_4] + [(e^2 - e) + d_2] = d_4 + d_2 \in \Delta(R),
\]
as wanted.
\end{proof}

The next consequence is very useful for our successful presentation.

\begin{corollary}\label{6 in Jacobson}
Let $R$ be a DT ring. Then, $6 \in J(R)$.
\end{corollary}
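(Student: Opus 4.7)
The plan is to upgrade the conclusion $6\in\Delta(R)$ of Proposition~\ref{6 in Delta} to $6\in J(R)$. Since $6$ is central, by the standard characterization of the Jacobson radical it suffices to prove that $1+6r\in U(R)$ for every $r\in R$.

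First, I would extract a couple of small units from $6\in\Delta(R)$: substituting the units $u=1$ and $u=-1$ into the defining property $6+u\in U(R)$ immediately yields $7,5\in U(R)$, and hence $35\in U(R)$. Let $\alpha:=35^{-1}$, which is central. Now, given an arbitrary $r\in R$, I would use the DT decomposition to write $r=f+d$ with $f\in\operatorname{Tr}(R)$ and $d\in\Delta(R)$, and split
\[
1+6r=(1+6f)+6d.
\]
The term $6d$ lies in $\Delta(R)$, because $\Delta(R)$ is a subring containing $6$; and the defining property of $\Delta(R)$ gives $\Delta(R)+U(R)\subseteq U(R)$. Therefore the whole matter reduces to proving that $1+6f\in U(R)$ for every tripotent $f$.

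The main step is to exhibit an explicit inverse of $1+6f$. My plan is to try the ansatz $w:=1+6\alpha f-36\alpha f^{2}$, whose shape is forced by solving $(1+6f)(1+af+bf^{2})=1$ modulo the relation $f^{3}=f$: this linear system produces the denominator $35$, which is precisely why securing $35\in U(R)$ at the outset is essential. A direct multiplication, reducing all cubes via $f^{3}=f$ and using that $\alpha$ is central, shows that both $(1+6f)w$ and $w(1+6f)$ collapse to $1+(6-210\alpha)f$, and this equals $1$ because $210\alpha=(6\cdot 35)\cdot 35^{-1}=6$. The only genuine obstacle is spotting that the natural inverse requires $35$ to be invertible; once this is extracted from $6\in\Delta(R)$, the remainder of the argument is purely formal.
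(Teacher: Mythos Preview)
Your proof is correct, but it follows a different path from the paper's. The paper argues in two stages: first it shows $12\in J(R)$ by writing $1-12r=1-2\cdot 6e-12d$ and invoking Lemma~\ref{prop d,e}(3) (which gives $2e\cdot 6\in\Delta(R)$ for a tripotent $e$) together with the subring property of $\Delta(R)$; then it bootstraps to $6\in J(R)$ via the factorization $(1-6s)(1+6s)=1-36s^{2}\in U(R)$, since $36\in J(R)$. Your argument bypasses the intermediate $12\in J(R)$ step entirely by extracting $5,7\in U(R)$ from $6\in\Delta(R)$ and then writing down an explicit inverse of $1+6f$ over the localization at $35$. What your approach buys is a one-shot argument with a concrete inverse, independent of Lemma~\ref{prop d,e}; what the paper's approach buys is that it reuses the already-developed machinery of Lemma~\ref{prop d,e} and avoids having to guess the form of the inverse (and the denominator $35$). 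Both are short and elementary.
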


\begin{proof}
First, we show that $12 \in J(R)$. To that end, choose $r \in R$ to be arbitrary with a DT representation $r = e + d$. Since thanks to Proposition~\ref{6 in Delta}, we extract $6 \in \Delta(R)$, one observes that Lemma \ref{prop d,e} applies to write that
\[
1 - 12r = 1 - 12(e + d) = 1 - 12e - 12d = 1 - 2 \cdot 6e - 12d \in 1 + \Delta(R) + \Delta(R) \subseteq U(R),
\]
which insures $12 \in J(R)$. Moreover, since $36 = 3 \cdot 12 \in J(R)$, for every $s \in R$ we detect:
\[
(1 - 6s)(1 + 6s) = 1 - 36s^2 \in U(R),
\]
which discovers that $6 \in J(R)$, as desired.
\end{proof}

Two more consequences now hold:

\begin{corollary}\label{2 in U(R) iff 3 in J(R)}
Let $R$ be a DT ring. Then:

(1) $2 \in U(R)$ if, and only if, $3 \in J(R)$.

(2) $3 \in U(R)$ if, and only if, $2 \in J(R)$.
\end{corollary}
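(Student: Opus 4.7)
The entire corollary should follow straight from Corollary~\ref{6 in Jacobson}, which tells us that $6 \in J(R)$. Since $J(R)$ is a two-sided ideal, and since $6 = 2 \cdot 3 = 3 \cdot 2$, each of the two claims reduces to a trivial manipulation; there is no real obstacle once $6 \in J(R)$ is in hand.

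For part (1), the plan is as follows. For the forward direction, assume $2 \in U(R)$. Multiplying the membership $6 \in J(R)$ by $2^{-1} \in R$ and using that $J(R)$ is an ideal yields $3 = 2^{-1}\cdot 6 \in J(R)$. For the reverse direction, assume $3 \in J(R)$. Since $J(R)$ is closed under negation, $-3 \in J(R)$, so the standard property $1 + J(R) \subseteq U(R)$ gives $1+(-3) = -2 \in U(R)$, and hence $2 \in U(R)$.

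For part (2), the argument is entirely symmetric. If $3 \in U(R)$, then from $6 \in J(R)$ we extract $2 = 3^{-1}\cdot 6 \in J(R)$. Conversely, if $2 \in J(R)$, then $3 = 1 + 2 \in 1 + J(R) \subseteq U(R)$, so $3 \in U(R)$.

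The only thing worth flagging is a potential confusion about which characterization of $J(R)$ is invoked in the reverse directions, but since $J(R)$ is an ideal the membership $1 \pm J(R) \subseteq U(R)$ is unambiguous and immediate; no appeal to DT-structure or to $\Delta(R)$ is needed beyond what is already packaged into Corollary~\ref{6 in Jacobson}.
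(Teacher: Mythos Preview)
Your proof is correct and is exactly the straightforward argument the paper has in mind; the paper's own proof simply reads ``It is straightforward based on Corollary~\ref{6 in Jacobson}'', and you have spelled out precisely those details.
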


\begin{proof}
It is straightforward based on Corollary \ref{6 in Jacobson}.
\end{proof}

\begin{corollary}\label{p and Delta p=2 or 3}
Let $R$ be a DT ring such that, for some prime number $p$, we have $p \in \Delta(R)$. Then, either $p=2$ or $p=3$.
\end{corollary}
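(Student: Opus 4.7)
The plan is to derive a contradiction by showing that if $p \in \Delta(R)$ is a prime distinct from $2$ and $3$, then $1 \in \Delta(R)$, which is impossible for any nonzero ring.

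First I would note that, by Proposition~\ref{6 in Delta}, we already know $6 \in \Delta(R)$. Now if $p$ is a prime different from $2$ and $3$, then $\gcd(p,6)=1$, so there exist integers $a,b \in \mathbb{Z}$ with $ap+6b = 1$. Since $\Delta(R)$ is closed under addition and closed under multiplication by units (in particular by $\pm 1$), every integer multiple of an element of $\Delta(R)$ lies in $\Delta(R)$. Hence $ap$ and $6b$ both lie in $\Delta(R)$, and adding them gives $1 = ap + 6b \in \Delta(R)$.

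Next I would derive a contradiction from $1 \in \Delta(R)$. By the very definition of $\Delta(R)$, we have $1 + u \in U(R)$ for every $u \in U(R)$. Applying this to the unit $u = -1$ yields $0 \in U(R)$, which forces $R = 0$. Since the prime $p$ is a genuine element of a (nonzero) ring, this is the desired contradiction, and so $p \in \{2,3\}$.

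The only potential obstacle is justifying $ap, 6b \in \Delta(R)$ for arbitrary integers $a,b$; but this is immediate from the fact recalled in the Introduction (following Leroy--Matczuk \cite{lmr}) that $\Delta(R)$ is a subring of $R$, and it can also be seen directly from Lemma \ref{prop d,e} together with the closure of $\Delta(R)$ under addition. The proof is then short and purely formal.
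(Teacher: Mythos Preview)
Your proof is correct and follows essentially the same approach as the paper's: both use Proposition~\ref{6 in Delta} to get $6\in\Delta(R)$, then use $\gcd(p,6)=1$ to conclude $1\in\Delta(R)$ and derive a contradiction. You simply fill in details (B\'ezout's identity, the explicit contradiction via $0\in U(R)$) that the paper leaves implicit.
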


\begin{proof}
Utilizing Proposition \ref{6 in Delta}, we get $6 \in \Delta(R)$. If $p \neq 2$ and $p \neq 3$, then $(p,6)=1$, which ensures $1 \in \Delta(R)$. This, however, is a contradiction. That is why, we must have either $p=2$ or $p=3$, as stated.
\end{proof}

\section{Group Rings}

Let $R$ be a ring and $G$ a group. As usual, the notation $RG$ stands for the group ring as being a module over $R$ with elements of $G$ as a basis. The homomorphism $\varepsilon \colon RG \to R$, defined by $\sum r_g g \mapsto \sum r_g$, is standardly known as the \textit{augmentation homomorphism} of $RG$. Its kernel, $\ker(\varepsilon)$, referred to as the \textit{augmentation ideal} of $RG$, is denoted by $\varepsilon(RG)$, and equals to $$\varepsilon(RG)=\left\{ \sum_{g \in G}a_g(1 - g) \colon 1 \neq g \in G, a_g \in G \right\}.$$

Traditionally, a group $G$ is called \textit{locally finite}, provided that any subgroup generated by a finite subset of $G$ is itself finite. When $p$ is a prime, a $p$-group means that every its element has order equal to a power of $p$. If all non-identity elements of a group have order exactly $p$, the group is said to have \textit{exponent} $p$. The notation $C_n$ represents the classical cyclic group having only $n$ elements.

\medskip

We now come to our first general result in this section.

\begin{theorem}\label{p in Delta and p-group}
Let $R$ be a ring and $G$ a group such that, for each $1 \neq g \in G$, we have $1-g \in \Delta(RG)$. Then, $G$ is a $p$-group, where $p \in \Delta(R)$.
\end{theorem}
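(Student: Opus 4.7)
The proof splits into three stages. First, to rule out infinite-order elements, suppose $g \in G$ has infinite order. Then $g^2 \in U(RG)$, and from $1 - g \in \Delta(RG)$ we obtain $1 - g + g^2 = (1 - g) + g^2 \in U(RG)$. Because $RG$ is both a free left $R\langle g\rangle$-module on right-coset representatives of $\langle g\rangle \subseteq G$ and a free right $R\langle g\rangle$-module on left-coset representatives, any element of $R\langle g\rangle = R[g, g^{-1}]$ that is a unit of $RG$ is already a unit of $R[g, g^{-1}]$. But in any Laurent polynomial ring $S[g, g^{-1}]$ over a nonzero ring $S$, the element $1 - g + g^2$ is not a unit: if $(1 - g + g^2)\sum_i s_i g^i = 1$, then comparing coefficients at the maximal index $M$ and minimal index $m$ of the support of $\sum s_i g^i$ forces $M = -2$ and $m = 0$, contradicting $m \leq M$. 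Hence no such $g$ exists.

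Second (the crux), I would show that if $g$ has prime order $p$, then $p \in \Delta(R)$. Let $\sigma = 1 + g + \cdots + g^{p-1}$. Then $\sigma^2 = p\sigma$ and $\sigma - p = -\sum_{k=1}^{p-1}(1 - g^k) \in \Delta(RG)$. Fix $u \in U(R)$ and define $a = (\sigma - p)u^{-1} \in \Delta(RG)$; using $\sigma^2 = p\sigma$ and that $u \in R$ commutes with $\sigma$, one computes $a^2 = -pu^{-1} \cdot a$. Since $a \in \Delta(RG)$, $1 - a \in U(RG)$, and the freeness argument of the first stage, applied to $R[g] \subseteq RG$, gives $1 - a \in U(R[g])$. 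In the free $R$-basis $\{1, g, \ldots, g^{p-1}\}$ of $R[g]$,
\[ 1 - a = \bigl(1 + (p-1)u^{-1}\bigr) + \sum_{k=1}^{p-1}(-u^{-1})g^k; \]
writing $(1-a)^{-1} = \sum_{k=0}^{p-1}\beta_k g^k$ and matching coefficients of each $g^k$ in both $(1-a)(1-a)^{-1} = 1$ and $(1-a)^{-1}(1-a) = 1$, then subtracting each $k \neq 0$ equation from the $k = 0$ equation, yields
\[ (1 + pu^{-1})(\beta_0 - \beta_k) = 1 = (\beta_0 - \beta_k)(1 + pu^{-1}) \]
for every $k \neq 0$. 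Hence $1 + pu^{-1} \in U(R)$, and multiplying on the right by $u$ gives $u + p \in U(R)$; as $u \in U(R)$ was arbitrary, $p \in \Delta(R)$.

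Third, pick any nonidentity $g \in G$; by the first stage its order $n$ is finite. If $n$ were divisible by two distinct primes $p \neq q$, then suitable powers of $g$ would give elements of exact orders $p$ and $q$, forcing $p, q \in \Delta(R)$ by the second stage. A Bezout identity and the additive closure of $\Delta(R)$ would then place $1 \in \Delta(R)$, so taking $u = -1$ would give $0 \in U(R)$, contradicting $R \neq 0$. Hence $n$ is a prime power; the same reasoning applied to two elements of different prime-power orders across $G$ pins the underlying prime to a single $p$. Thus $G$ is a $p$-group with $p \in \Delta(R)$.

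The main obstacle is the second stage: converting the containment $\sigma - p \in \Delta(RG)$ into the scalar statement $p \in \Delta(R)$. The augmentation kills $\sigma - p$, and working inside the subring $R[\sigma] \subseteq R[g]$ is too restrictive — a candidate inverse for $1 - a$ inside $R[\sigma]$ would already require $1 + pu^{-1}$ to be a unit. The trick is to work inside the full free $R$-module $R[g]$ of rank $p$, where matching coefficients from both left- and right-multiplication forms of $(1-a)(1-a)^{-1} = 1$ produces the same $\beta_0 - \beta_k$ as both a left and a right inverse of $1 + pu^{-1}$ in $R$.
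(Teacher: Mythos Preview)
Your argument is correct and follows the same three-stage route as the paper: rule out infinite order via the Laurent-polynomial contradiction for $1-g+g^2$, extract $p\in\Delta(R)$ from $p-\sigma=\sum_{i=1}^{p-1}(1-g^i)\in\Delta(RG)$ by a coefficient comparison inside $R\langle g\rangle\cong RC_p$, and pin down a single prime via a B\'ezout identity. The only executional difference is in the second stage: the paper passes to the quotient $RC_p/(\sigma)$ and reads off $(1+pu)a_0=1$ from the image of the inverse, whereas you stay in the free $R$-module $R[g]$ and subtract the $g^k$-equations from the $g^0$-equation; your version has the minor advantage of making the two-sidedness of the inverse of $1+pu^{-1}$ in $R$ explicit. (The identity $a^2=-pu^{-1}a$ that you record is correct but is never used in the rest of your argument.)
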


\begin{proof}
First, we show that $G$ is a torsion group. To this goal, suppose the contrary that there exists $g \in G$ of infinite order. Since $1-g \in \Delta(RG)$, we have $1-g+g^2 \in U(RG)$. Therefore, there exist integers $n < m$ and elements $a_i$ with $a_n \neq 0 \neq a_m$ such that
\[
(1-g+g^2)\sum_{i=n}^{m} a_ig^i = 1.
\]
This unambiguously leads to a contradiction, and thus every element $g \in G$ must have finite order. (Note that in view of \cite[Proposition 4(i)]{coon}, for every subgroup $H$ of $G$, it must be that $U(RG) \cap RH \subseteq U(RH)$.)

Furthermore, since each element \( g \in G \) has finite order, let \( n \) be the order of some \( g \in G \), and let \( p \) be a prime divisor of \( n \). Then, there will exist an element \( g \in G \) of order \( p \). But, for any \( u \in U(R) \), the element
\[ 1 + (1 - g)(p + (p-1)g + (p-2)g^2 + \dots + g^{p-1})u \]
admits a right inverse \( x \). Same as in the previous arguments, \( x \) can be expressed as a polynomial of \( g \), and hence without loss of generality the equation
\[ \big(1 + (1 - g)(p + (p-1)g + (p-2)g^2 + \dots + g^{p-1})u)\big)x=1 \]
can be interpreted to hold in the ring \( RG = RC_p \), where \( C_p \) is the cyclic group of order \( p \).

Now, let \( J \) be the ideal of \( R \) generated by the elements \( \sum_{i=0}^{p-1}g^{i} \). So, assuming $\overline{RG} := RG/J$, we conclude $\overline{1+pu} = \bar{1}$, where
\[ x = a_0 + a_1g + \cdots + a_{p-2}g^{p-2}. \]
Thus, we obtain $(1+pu)a_0 = 1$, and since \( u \in U(R) \) was arbitrary, this guarantees $p \in \Delta(R)$.

We next assert that no other prime \( q \in \Delta(R) \) can satisfy this property, whence $(p,q)=1$ assuring that $1 \in \Delta(R)$. This is an obvious contradiction, however. Thus, \( p \) is the unique prime dividing the order of any element in \( G \), and hence \( G \) must be a \( p \)-group, as asserted.
\end{proof}

We now need a series of some technical things.

\begin{lemma}\label{1}
The inclusion $\varepsilon(\Delta(RG)) \subseteq \Delta(R)$ is always fulfilled.
\end{lemma}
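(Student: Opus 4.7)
The plan is to verify the defining property of $\Delta(R)$ directly on an arbitrary element of $\varepsilon(\Delta(RG))$. Take $y \in \varepsilon(\Delta(RG))$ and write $y = \varepsilon(x)$ for some $x \in \Delta(RG)$. We must show that $y + u \in U(R)$ for every $u \in U(R)$.

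Fix an arbitrary $u \in U(R)$. Under the canonical embedding $R \hookrightarrow RG$ sending $r \mapsto r \cdot 1_G$, the element $u$ remains invertible in $RG$, since its inverse in $R$ serves as its inverse in $RG$. By the definition of $\Delta(RG)$, the condition $x \in \Delta(RG)$ then forces $x + u \in U(RG)$.

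Now apply the augmentation homomorphism $\varepsilon \colon RG \to R$. Since $\varepsilon$ is a ring homomorphism, it sends units to units; and since $\varepsilon(u \cdot 1_G) = u$, we obtain
\[
y + u \;=\; \varepsilon(x) + u \;=\; \varepsilon(x + u) \;\in\; U(R).
\]
As $u \in U(R)$ was arbitrary, this yields $y \in \Delta(R)$, which is precisely the desired inclusion.

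The argument is essentially a routine unwinding of the definitions, so I do not anticipate any real obstacle: the only facts in play are that $R$ embeds as a unital subring of $RG$ (so units of $R$ remain units in $RG$) and that the ring homomorphism $\varepsilon$ preserves both sums and units. Hence the proof should be a couple of lines at most.
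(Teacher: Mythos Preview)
Your proof is correct and essentially identical to the paper's: both embed $u\in U(R)$ into $U(RG)$, invoke the defining property of $\Delta(RG)$, and then push back along the unital homomorphism $\varepsilon$. The only cosmetic difference is that the paper uses the equivalent characterization $1-u\varepsilon(d)=\varepsilon(1-ud)\in U(R)$ instead of $\varepsilon(d)+u=\varepsilon(d+u)\in U(R)$.
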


\begin{proof}
Choose $d \in \Delta(RG)$ and $u \in U(R)$. Since $\varepsilon(u)=u$ and $\varepsilon(U(RG)) \subseteq U(R)$, we readily inspect that
\[
1 - u\varepsilon(d) = \varepsilon(1 - ud) \in \varepsilon(U(RG)) \subseteq U(R),
\]
as asked for.
\end{proof}

\begin{lemma}\label{Delta(RG) subset Delta(RH)}
Let $R$ be a ring and $H$ a subgroup of $G$. Then, the inclusion $\Delta(RG) \cap RH \subseteq \Delta(RH)$ holds.
\end{lemma}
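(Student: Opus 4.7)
The plan is to unfold the definition of $\Delta$ and reduce the inclusion to the already-cited fact that units of $RG$ lying in $RH$ are in fact units of $RH$, i.e., $U(RG) \cap RH \subseteq U(RH)$ from \cite[Proposition 4(i)]{coon}.

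First, I would pick an arbitrary element $x \in \Delta(RG) \cap RH$, and to show $x \in \Delta(RH)$, I would take an arbitrary unit $v \in U(RH)$ and aim to prove $1 - xv \in U(RH)$. The key trivial observation is that $RH$ sits inside $RG$ as a subring containing the identity, so every unit of $RH$ is also a unit of $RG$; in particular $v \in U(RG)$. Since $x \in \Delta(RG)$, the defining property of $\Delta$ immediately gives $1 - xv \in U(RG)$.

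Next, I would note that the element $1 - xv$ also lies in $RH$ because both $x$ and $v$ (and hence their product) belong to $RH$, which is closed under multiplication and contains $1$. Therefore
\[
1 - xv \in U(RG) \cap RH.
\]
Invoking the inclusion $U(RG) \cap RH \subseteq U(RH)$ recalled in the proof of Theorem~\ref{p in Delta and p-group}, I can conclude $1 - xv \in U(RH)$. Since $v \in U(RH)$ was arbitrary, this proves $x \in \Delta(RH)$, completing the argument.

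There is no real obstacle here: the statement is essentially a formal transfer of the $\Delta$-property from the large group ring down to the smaller one, and the only nontrivial ingredient, namely the stability of units under passage to $RH$, is already available in the paper. The only care needed is to make sure that $v \in U(RH)$ is sent to a genuine unit of $RG$ (clear, since the ambient identity coincides) and that the resulting element $1-xv$ sits in $RH$ (clear from closure).
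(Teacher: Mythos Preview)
Your proof is correct and follows essentially the same approach as the paper: pick an element of $\Delta(RG)\cap RH$, test it against an arbitrary $v\in U(RH)\subseteq U(RG)$, and then use $U(RG)\cap RH\subseteq U(RH)$ from \cite[Proposition 4(i)]{coon} to conclude $1-xv\in U(RH)$. The paper's proof is the same argument, only more tersely written.
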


\begin{proof}
Choosing $f \in \Delta(RG) \cap RH$ and $u \in U(RH) \subseteq U(RG)$, we apply \cite[Proposition 4(i)]{coon} to get that $U(RG) \cap RH \subseteq U(RH)$ and, therefore,
\[
1 - fu \in U(RG) \cap RH \subseteq U(RH),
\]
forcing $f \in \Delta(RH)$, as pursued.
\end{proof}

\begin{lemma}\label{u in U(RG)}
Let $R$ be a ring and $G$ a group with $\varepsilon(RG) \subseteq J(RG)$. Then, for any $u \in RG$ with $\varepsilon(u) \in U(R)$, we have $u \in U(RG)$.
\end{lemma}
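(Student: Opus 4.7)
The plan is to exploit the decomposition $u = \varepsilon(u) + (u - \varepsilon(u))$ and the standard stability of units under perturbation by Jacobson radical elements. Set $a := \varepsilon(u) \in U(R)$. Under the canonical embedding $R \hookrightarrow RG$, any unit of $R$ is a unit of $RG$, so $a \in U(RG)$.

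Next I would verify that $u - a$ lies in the augmentation ideal. Since $\varepsilon$ is a ring homomorphism fixing $R$ elementwise, one computes $\varepsilon(u - a) = \varepsilon(u) - \varepsilon(a) = a - a = 0$, so $u - a \in \ker(\varepsilon) = \varepsilon(RG)$. By the standing hypothesis $\varepsilon(RG) \subseteq J(RG)$, we obtain $u - a \in J(RG)$.

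The conclusion is then immediate: since $J(RG)$ is a two-sided ideal, $a^{-1}(u - a) \in J(RG)$, so
\[
a^{-1}u = 1 + a^{-1}(u - a) \in 1 + J(RG) \subseteq U(RG),
\]
which forces $u \in U(RG)$, as wanted.

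There is essentially no obstacle here; the only point requiring mild care is the notational convention that $\varepsilon(RG)$ denotes the kernel (augmentation ideal) rather than the image of the augmentation map, and the tacit identification of $R$ with the subring $R \cdot 1_G \subseteq RG$ so that $U(R) \subseteq U(RG)$.
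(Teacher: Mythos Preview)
Your proof is correct and follows essentially the same approach as the paper: both arguments multiply $u$ by (a preimage of) $\varepsilon(u)^{-1}$ and observe that the result lies in $1+\ker\varepsilon\subseteq 1+J(RG)\subseteq U(RG)$. The only cosmetic difference is that you take the specific element $a^{-1}\in R\subseteq RG$, whereas the paper invokes surjectivity of $\varepsilon$ to pick an arbitrary preimage $v$ of $\varepsilon(u)^{-1}$; your choice is in fact the most natural one.
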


\begin{proof}
Since $\varepsilon(u) \in U(R)$, there is $r \in R$ such that $\varepsilon(u)r = r\varepsilon(u) = 1$. But, because $\varepsilon$ is surjective, there is $v \in RG$ with $\varepsilon(v) = r$. Thus, one checks that
\[
\varepsilon(1-uv) = 1 - \varepsilon(u)\varepsilon(v) = 1 - \varepsilon(u)r = 1-1 = 0,
\]
and so $1-uv \in \ker\varepsilon \subseteq J(RG)$ giving $uv \in U(RG)$. Similarly, one obtains that $vu \in U(RG)$, and hence $u \in U(RG)$, as claimed.
\end{proof}

\begin{lemma}\label{Delta(R) subset Delta(RG)}
Let $R$ be a ring with $p \in J(R)$ for some prime $p$, and let $G$ be a locally finite $p$-group. Then, $\Delta(R)G \subseteq \Delta(RG)$.
\end{lemma}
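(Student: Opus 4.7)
The plan is to verify directly that every $a = \sum_{g \in G} a_g g \in \Delta(R)G$ (with finite support and all $a_g \in \Delta(R)$) satisfies $1 - au \in U(RG)$ for every $u \in U(RG)$, which is precisely the defining property of membership in $\Delta(RG)$.

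The crucial preliminary fact I would establish first is that, under the present hypotheses, $\varepsilon(RG) \subseteq J(RG)$. Any element of $\varepsilon(RG)$ has finite support and hence lies in $\varepsilon(RH)$ for some finitely generated subgroup $H \leq G$; local finiteness together with the $p$-group hypothesis force $H$ to be a finite $p$-group. For such an $H$, the classical nilpotency of the augmentation ideal $\varepsilon(\mathbb{F}_p H)$ (the unique maximal ideal of the local ring $\mathbb{F}_p H$, whose dimension over $\mathbb{F}_p$ is a power of $p$) supplies an integer $N$ with $\varepsilon(RH)^N \subseteq pRH$; combining this with $p \in J(R)$ and the centrality of $p$ in $RH$ yields $\varepsilon(RH) \subseteq J(RH)$, and since units of $RH$ remain units in $RG$ by \cite[Proposition 4(i)]{coon}, the inclusion $\varepsilon(RG) \subseteq J(RG)$ follows.

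Granted this, the main argument is a short application of Lemma~\ref{u in U(RG)} to $x = 1 - au$. Since $\Delta(R)$ is closed under addition, $\varepsilon(a) = \sum_g a_g \in \Delta(R)$; since $u \in U(RG)$, $\varepsilon(u) \in U(R)$; and by \cite[Lemma 1(2)]{lmr} (closure of $\Delta(R)$ under multiplication by units) the product $\varepsilon(a)\varepsilon(u)$ lies in $\Delta(R)$. Hence $\varepsilon(1 - au) = 1 - \varepsilon(a)\varepsilon(u) \in U(R)$, because $d \in \Delta(R)$ implies $1 - d \in U(R)$ by choosing the unit $1$ in the defining property of $\Delta(R)$. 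Lemma~\ref{u in U(RG)} then delivers $1 - au \in U(RG)$, so $a \in \Delta(RG)$, completing the argument.

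The main obstacle I anticipate is the preliminary step $\varepsilon(RG) \subseteq J(RG)$, and more narrowly the passage from $\varepsilon(RH)^N \subseteq pRH$ to $pRH \subseteq J(RH)$ for finite $p$-groups $H$. This requires the classical observation that $J(R) \cdot RH \subseteq J(RH)$ when $H$ is finite, which can be argued directly using the centrality of $p$ and a unit-lifting computation in $RH$, or simply invoked from standard modular group ring theory. Once this hurdle is cleared, the remainder of the proof is a routine augmentation calculation combined with the two lemmas already established in the paper.
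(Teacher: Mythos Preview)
Your proposal is correct and follows essentially the same route as the paper: reduce to checking that $1-au$ (the paper writes $1-ud$) has invertible augmentation and then invoke Lemma~\ref{u in U(RG)}. The only difference is that where the paper cites \cite[Lemma 2]{zhoucl} for the inclusion $\varepsilon(RG)\subseteq J(RG)$, you sketch its proof; your sketch is standard, though the passage from $\varepsilon(RH)\subseteq J(RH)$ for finite $p$-subgroups $H$ to $\varepsilon(RG)\subseteq J(RG)$ requires, for each $r\in RG$, enlarging $H$ to contain the support of $r$ as well---a routine step you left implicit.
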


\begin{proof}
Using \cite[Lemma 2]{zhoucl}, we have $\varepsilon(RG) \subseteq J(RG)$. Now, let $d = \sum_{g \in G} a_g g \in \Delta(R)G$ and $u \in U(RG)$. Since $\Delta(R)$ is a subring and thus it is closed under addition, it must be that $\sum a_g \in \Delta(R)$. As $\varepsilon(u) \in U(R)$, we may write
\[
\varepsilon(1-ud) = 1 - \varepsilon(u)\sum a_g \in U(R).
\]
Employing Lemma \ref{u in U(RG)}, we have $1-ud \in U(RG)$, which shows $d \in \Delta(RG)$, as required.
\end{proof}

\begin{lemma}\label{RG gdt then R is so}
Let $R$ be a ring and $G$ a group. If $RG$ is a DT ring, then $R$ is also a DT ring.
\end{lemma}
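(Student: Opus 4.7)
The plan is to use the augmentation homomorphism $\varepsilon \colon RG \to R$, which is a surjective ring homomorphism fixing the copy of $R$ sitting inside $RG$ as $R\cdot 1_G$. Since DT decomposability is expressed in terms of sums of a tripotent and a $\Delta$-element, and both tripotents and (by Lemma~\ref{1}) elements of $\Delta(RG)$ push forward under $\varepsilon$ to elements of the corresponding sets in $R$, the property should descend along $\varepsilon$ with almost no work.

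Concretely, I would pick an arbitrary $r \in R$ and regard it as the element $r \cdot 1_G \in RG$. Using the DT hypothesis on $RG$, write $r = \alpha + \delta$ with $\alpha \in \operatorname{Tr}(RG)$ and $\delta \in \Delta(RG)$. Apply $\varepsilon$ to both sides to obtain
\[
r = \varepsilon(r) = \varepsilon(\alpha) + \varepsilon(\delta).
\]
Then I need two checks: first, $\varepsilon(\alpha) \in \operatorname{Tr}(R)$, which is immediate since $\varepsilon$ is a ring homomorphism and $\alpha^3=\alpha$ implies $\varepsilon(\alpha)^3=\varepsilon(\alpha^3)=\varepsilon(\alpha)$; second, $\varepsilon(\delta) \in \Delta(R)$, which is exactly the content of Lemma~\ref{1}.

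There is no real obstacle here: the only nontrivial ingredient is the already-established inclusion $\varepsilon(\Delta(RG)) \subseteq \Delta(R)$, and everything else is formal. The argument is short enough that I would present it in essentially the two displayed lines above, concluding that $r \in \operatorname{Tr}(R) + \Delta(R)$ and hence that $R$ is a DT ring.
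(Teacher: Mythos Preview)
Your proof is correct and matches the paper's argument essentially line for line: both view $r\in R$ inside $RG$, take a DT decomposition there, apply the augmentation map $\varepsilon$, and use that $\varepsilon$ preserves tripotents together with Lemma~\ref{1} to land back in $\operatorname{Tr}(R)+\Delta(R)$. There is nothing to add or change.
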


\begin{proof}
Let $a \in R$. Since $RG$ is a DT ring, suppose $a = e + d$ is a DT representation in $RG$. As $a \in R$, we write
\[
a = \varepsilon(a) = \varepsilon(e) + \varepsilon(d).
\]
Clearly, $\varepsilon(e) \in \operatorname{Tr}(R)$, and hence Lemma~\ref{1} enables us that $\varepsilon(d) \subseteq \Delta(R)$, as needed.
\end{proof}

Referring to Lemma \ref{RG gdt then R is so}, if $RG$ is a DT ring, then $R$ is too a DT ring. In what follows, we attempt to menage the conditions that the group $G$ must satisfy when $RG$ is a DT ring.

\medskip

Specifically, the following statements are true.

\begin{lemma}\label{1-g2 in Delta}
Let $RG$ be a DT ring. Then, for every $g \in G$, we have $1-g^2 \in \Delta(RG)$.
\end{lemma}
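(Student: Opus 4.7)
The plan is to exploit the DT decomposition of the group element $g$ itself. Since $g \in U(RG)$, writing $g = e + d$ with $e \in \operatorname{Tr}(RG)$ and $d \in \Delta(RG)$ puts strong constraints on the tripotent part $e$. Concretely, because $\Delta(RG)$ is closed under negation and because, by the defining property of $\Delta(RG)$, adding any element of $\Delta(RG)$ to a unit yields a unit, we obtain $e = g + (-d) \in U(RG)$. So $e$ is a \emph{tripotent unit}.

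The observation that drives the whole proof is that any tripotent unit squares to $1$: from $e^3 = e$, left-multiplying by $e^{-1}$ gives $e^2 = 1$. This reduces $e$ to an involution and is the only nontrivial step; once in hand, the rest is computation.

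With $e^2 = 1$ we expand
\[
g^2 = (e+d)^2 = e^2 + ed + de + d^2 = 1 + ed + de + d^2,
\]
so that
\[
1 - g^2 = -\bigl(ed + de + d^2\bigr).
\]
It remains to observe that each summand lies in $\Delta(RG)$: the element $d^2$ is in $\Delta(RG)$ because $\Delta(RG)$ is a subring, while $ed$ and $de$ are in $\Delta(RG)$ because $e \in U(RG)$ and $\Delta(RG)$ is closed under multiplication by units (this is precisely \cite[Lemma 1(2)]{lmr}, already cited repeatedly in the excerpt). Since $\Delta(RG)$ is additively closed, the conclusion $1 - g^2 \in \Delta(RG)$ follows.

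I do not anticipate a genuine obstacle; the entire argument hinges on the single slick observation that a tripotent unit is an involution, after which the expansion of $g^2$ is transparent and the membership in $\Delta(RG)$ is immediate from the standard closure properties. No appeal to Lemma~\ref{power a then a} or the more delicate commutator identities is needed for this particular statement.
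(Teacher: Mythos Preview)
Your proof is correct and follows essentially the same route as the paper's: write $g = e + d$, observe that $e = g - d$ is a unit (hence a tripotent unit, so $e^2 = 1$), and expand $g^2 = 1 + ed + de + d^2$ using the closure of $\Delta(RG)$ under multiplication by units. The only cosmetic differences are that the paper phrases the step $e^2 = 1$ as ``$e^2 \in \operatorname{Id}(RG) \cap U(RG) = \{1\}$'' and attributes the membership $ed + de + d^2 \in \Delta(RG)$ to Lemma~\ref{prop d,e} rather than citing \cite[Lemma 1(2)]{lmr} directly.
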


\begin{proof}
Supposing $g = e + d$ is a DT representation, we then write $$e = g - d \in U(RG) + \Delta(RG) \subseteq U(RG).$$ Hence, $$e^2 \in \operatorname{Id}(RG) \cap U(RG) = \{1\}.$$ Therefore, invoking Lemma \ref{prop d,e}, we deduce
\[
g^2 = 1 + (ed + de) + d^2 \in 1 + \Delta(RG).
\]
Thus, for each $g \in G$, one sees that $1 - g^2 \in \Delta(RG)$, as expected.
\end{proof}

\begin{lemma}\label{G torsin}
Let $RG$ be a DT ring. Then, $G$ is a torsion group.
\end{lemma}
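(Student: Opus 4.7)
The plan is to follow the template of the first half of Theorem~\ref{p in Delta and p-group}: assume for contradiction that some $g \in G$ has infinite order, produce a unit of $RG$ that lies inside the group ring $R\langle g\rangle$ of an infinite cyclic group, and then show that this putative unit cannot actually be a unit of a Laurent polynomial ring.

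More concretely, by Lemma~\ref{1-g2 in Delta} the element $1 - g^2$ lies in $\Delta(RG)$. Applying the defining property of $\Delta(RG)$ to the unit $g^2 \in U(RG)$ yields
\[
1 - (1 - g^2)\,g^2 \;=\; 1 - g^2 + g^4 \;\in\; U(RG).
\]
Next I would restrict to the subgroup $H := \langle g\rangle$. Since $g$ has infinite order, $RH$ is (isomorphic to) the Laurent polynomial ring $R[t, t^{-1}]$ (with $t \leftrightarrow g$); and by \cite[Proposition 4(i)]{coon}, $U(RG) \cap RH \subseteq U(RH)$. Hence $1 - t^2 + t^4$ would be a unit in $R[t, t^{-1}]$.

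Finally I would derive a contradiction by a coefficient comparison: writing a hypothetical inverse as $\sum_{i=n}^{m} a_i t^i$ with $a_n \neq 0 \neq a_m$ and $n \le m$, the coefficient of $t^{m+4}$ in the product $(1 - t^2 + t^4)\sum a_i t^i$ equals $a_m$, while the coefficient of $t^n$ equals $a_n$. In the product, all coefficients must vanish except the constant term, which must be $1$. Thus either $a_m = 0$ (contradiction) or $m + 4 = 0$, and either $a_n = 0$ (contradiction) or $n = 0$; the only consistent option would force $n = 0$ and $m = -4$, which is incompatible with $n \le m$. This contradiction shows that $g$ cannot have infinite order, so $G$ is a torsion group.

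The main (and only) obstacle is this last Laurent-polynomial coefficient argument, which however is essentially the one already invoked without detailed proof in Theorem~\ref{p in Delta and p-group}, so no real difficulty is expected beyond recording it carefully.
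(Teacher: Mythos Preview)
Your proposal is correct and follows essentially the same approach as the paper's own proof: use Lemma~\ref{1-g2 in Delta} to place $1-g^2$ in $\Delta(RG)$, extract from this a polynomial in $g$ that is a unit of $RG$, and then derive a contradiction by a Laurent--polynomial coefficient count (invoking \cite[Proposition 4(i)]{coon} to pass to $R\langle g\rangle$). The only cosmetic difference is the particular unit produced: the paper writes $1 - g + g^2 \in U(RG)$, whereas you use $1 - g^2 + g^4$ obtained from $1 - (1-g^2)g^2$; both feed into the identical endpoint argument already sketched in Theorem~\ref{p in Delta and p-group}.
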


\begin{proof}
A consultation with Lemma \ref{1-g2 in Delta} gives that, for every $g \in G$, $1 - g^2 \in \Delta(RG)$. Therefore, $1 - g + g^2 \in U(RG)$. Consequently, there are integers $n < m$ and elements $a_i$ with $a_n \neq 0 \neq a_m$ such that
\[
(1 - g + g^2)\sum_{i=n}^{m} a_ig^i = 1.
\]
This obviously leads us to a contradiction, and so each element $g \in G$ has finite order, as wanted.
\end{proof}

We are now ready to establish a series of our further major assertions.

\begin{theorem}\label{p neq 2 and 3}
Let $RG$ be a DT ring with $2 \notin \Delta(R)$ and $3 \notin \Delta(R)$. Then, $G$ is an elementary 2-group.
\end{theorem}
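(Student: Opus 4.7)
The plan is to show that every $g\in G$ satisfies $g^2=1$, and the key is to apply the already-proved Theorem~\ref{p in Delta and p-group} not to $G$ itself but to the cyclic subgroup $H:=\langle g^2\rangle$. The intuition is that Lemma~\ref{1-g2 in Delta} gives us $1-g^2\in\Delta(RG)$ for \emph{every} element, which is exactly the data needed to recognize squares (and only squares) as lying in the appropriate $\Delta$-set. Restricting attention to the cyclic group generated by $g^2$ is convenient because every element of $\langle g^2\rangle$ is a square in $G$, so the hypothesis of Theorem~\ref{p in Delta and p-group} will be available on $H$.

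More precisely, fix $g\in G$ and set $H:=\langle g^2\rangle$. An arbitrary $h\in H$ has the shape $h=g^{2m}=(g^m)^2$ for some integer $m$. Applying Lemma~\ref{1-g2 in Delta} to the element $g^m\in G$ yields $1-h=1-(g^m)^2\in\Delta(RG)$, and since $1-h\in RH$, Lemma~\ref{Delta(RG) subset Delta(RH)} upgrades this to $1-h\in\Delta(RH)$. Hence the hypothesis of Theorem~\ref{p in Delta and p-group} is satisfied for the pair $(R,H)$: for every $1\neq h\in H$, we have $1-h\in\Delta(RH)$.

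Now the dichotomy is clean. If $H$ is trivial, then $g^2=1$, which is what we want. Otherwise, Theorem~\ref{p in Delta and p-group} forces $H$ to be a nontrivial $p$-group with $p\in\Delta(R)$. By Corollary~\ref{p and Delta p=2 or 3}, this prime must be $2$ or $3$; but both possibilities are ruled out by our standing hypotheses $2\notin\Delta(R)$ and $3\notin\Delta(R)$, yielding a contradiction. Thus $H$ is trivial, i.e., $g^2=1$, and as $g\in G$ was arbitrary, $G$ is an elementary $2$-group.

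I do not expect a real obstacle here; the only mildly subtle point is recognizing that the right object to feed into Theorem~\ref{p in Delta and p-group} is the \emph{subgroup of squares} generated by a single element, so that the universally available hypothesis \textquotedblleft$1-g^2\in\Delta(RG)$ for all $g$\textquotedblright\ translates into the universal hypothesis \textquotedblleft$1-h\in\Delta(RH)$ for all $h\in H$\textquotedblright\ needed to trigger that theorem. Once this reduction is in place, Corollary~\ref{p and Delta p=2 or 3} together with the DT-ring arithmetic on $2$ and $3$ (Corollary~\ref{6 in Jacobson} / Corollary~\ref{2 in U(R) iff 3 in J(R)}) closes the argument.
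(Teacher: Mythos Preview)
Your proof is correct and rests on the same core idea as the paper's---apply Theorem~\ref{p in Delta and p-group} to the cyclic subgroup $H=\langle g^{2}\rangle$, after verifying its hypothesis via Lemma~\ref{1-g2 in Delta} and Lemma~\ref{Delta(RG) subset Delta(RH)}, and then invoke Corollary~\ref{p and Delta p=2 or 3} to reach a contradiction. Your argument is in fact tighter than the paper's: the paper first proves in a separate step that $G$ is a $2$-group (by ruling out elements of odd order) and only afterwards shows $G$ is elementary via $\langle g^{2}\rangle$, whereas you obtain $g^{2}=1$ for arbitrary $g$ in a single stroke; the only point worth making explicit is that Corollary~\ref{p and Delta p=2 or 3} requires $R$ itself to be DT, which follows from Lemma~\ref{RG gdt then R is so}.
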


\begin{proof}
First, we prove that $G$ is a 2-group. To this target, suppose there is $g \in G$ of odd order $o(g) = 2k+1$. In virtue of Lemmas \ref{1-g2 in Delta} and \ref{Delta(RG) subset Delta(RH)}, we have
\[
1 - g = -g(1 - g^{2k}) \in \Delta(RG) \cap R\langle g \rangle \subseteq \Delta(R\langle g \rangle).
\]
Moreover, for every $1 \leq i \leq 2k+1$, we write
\[
1 - g^i = (1 - g)(1 + g + \cdots + g^{i-1}) \in \Delta(R\langle g \rangle).
\]
Now, Theorem \ref{p in Delta and p-group} is a guarantor that the cyclic subgroup $\langle g \rangle$ is a $p$-group with $p \in \Delta(R)$. However, Lemma \ref{p and Delta p=2 or 3} assures that we must have either $p=2$ or $p=3$, which manifestly contradicts our assumption. Therefore, any element $g \in G$ has even finite order.

If, however, there is $g \in G$ with $o(g) = p_1^{\alpha_1}\cdots p_n^{\alpha_n}$, then, for each $1 \leq i \leq n$, we will arrive at $p_i = 2$, as for otherwise $G$ would contain an element of odd order, which by what we have already shown above is impossible. Thus, $G$ is indeed a 2-group.

Now, let $k>1$ be the smallest positive integer such that $g^{2^k} = 1$. Consider the subgroup
\[
H = \{1, g^2, g^4,g^6 \ldots, g^{2^{k-1}}\}.
\]
Certainly, $H$ is a subgroup of $G$. An exploitation of Lemmas \ref{Delta(RG) subset Delta(RH)} and \ref{1-g2 in Delta} shows that, for every $h \in H \subseteq G$,
\[
1 - h \in \Delta(RG) \cap RH \subseteq \Delta(RH).
\]
Thus, with Theorem \ref{p in Delta and p-group} at hand, $H$ is a 2-group with $2 \in \Delta(R)$, contradicting our assumption. Consequently, we conclude $k=1$, yielding that $G$ is really an elementary 2-group, as formulated.
\end{proof}

\begin{theorem}\label{G is 2-group}
Let $RG$ be a DT ring with $2 \in \Delta(R)$. Then, $G$ is a 2-group.
\end{theorem}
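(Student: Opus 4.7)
The plan is to adapt the odd-order portion of the proof of Theorem~\ref{p neq 2 and 3} to the present setting. Since $G$ is torsion by Lemma~\ref{G torsin}, showing $G$ is a $2$-group amounts to ruling out the existence of an element of odd prime order. So the strategy is to assume such an element exists, extract a cyclic subgroup of prime order $p$ (necessarily odd), and use the $\Delta$-hypotheses to force a contradiction.

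Concretely, suppose for contradiction some $g \in G$ has odd prime order $p$, and put $H = \langle g \rangle$. I would first upgrade Lemma~\ref{1-g2 in Delta} to the statement $1 - h \in \Delta(RH)$ for every $h \in H \setminus \{1\}$. Applied to suitable powers of $g$, that lemma gives $1 - g^{2i} \in \Delta(RG)$ for all $i$, and because $\gcd(2, p) = 1$ the squaring map is a bijection on $H$, so in fact $1 - g^j \in \Delta(RG)$ for every $1 \le j \le p-1$; Lemma~\ref{Delta(RG) subset Delta(RH)} then pushes this into $\Delta(RH)$. Equivalently, one can mimic the trick in Theorem~\ref{p neq 2 and 3} and write $1-g = -g(1 - g^{p-1})$, exploiting $g^{p-1} = g^{-1}$ and $g \in U(RG)$, then propagate via $1 - g^i = (1-g)(1+g+\cdots+g^{i-1})$. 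With this in hand, Theorem~\ref{p in Delta and p-group} applies to $H$ and delivers $p \in \Delta(R)$.

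From here the argument closes quickly. Corollary~\ref{p and Delta p=2 or 3} forces $p \in \{2,3\}$, and oddness of $p$ pins down $p = 3$. But then both $2$ and $3$ lie in $\Delta(R)$; since $\Delta(R)$ is closed under addition and under multiplication by the units $\pm 1$, it is closed under integer linear combinations, so $1 = 3 - 2 \in \Delta(R)$. This is absurd in a nonzero ring, because $1 \in \Delta(R)$ would give $0 = 1 + (-1) \in U(R)$. The contradiction shows no odd prime divides the order of any element of $G$, and hence $G$ is a $2$-group.

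I expect the principal subtlety to be the upgrade step, namely passing from the \emph{squares only} information supplied by Lemma~\ref{1-g2 in Delta} to the uniform statement $1 - h \in \Delta(RH)$ for every nontrivial $h \in H$ that is needed to feed Theorem~\ref{p in Delta and p-group}. The observation that unblocks this is that $2$ is invertible modulo any odd prime $p$, so every element of a cyclic group of odd order is a square. Once this is in place, the rest of the proof is a formal manipulation of $\Delta(R)$ using only its elementary closure properties, plus the two corollaries already established about primes in $\Delta(R)$.
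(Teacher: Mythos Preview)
Your argument is correct, and it follows a genuinely different path from the paper's own proof. You recycle the odd-order portion of Theorem~\ref{p neq 2 and 3}: once $1-h\in\Delta(RH)$ for every nontrivial $h$ in the cyclic group $H=\langle g\rangle$ of odd prime order $p$, Theorem~\ref{p in Delta and p-group} feeds $p$ into $\Delta(R)$, Corollary~\ref{p and Delta p=2 or 3} (which needs $R$ to be DT---immediate from Lemma~\ref{RG gdt then R is so}) forces $p=3$, and then $1=3-2\in\Delta(R)$ is the contradiction. The paper instead argues directly inside $RG$: it first shows $2\in J(RG)$ (using $6\in J(RG)$ and $3\in U(RG)$), then combines $1-g^{2k}\in\Delta(RG)$ with $2g^{2k}\in J(RG)$ to get $1+g^{2k}\in\Delta(RG)$, and from this assembles $\sum_{i=0}^{p-1}g^i\in U(RG)$; since this sum annihilates $1-g$, the existence of an element of odd order is impossible. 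Your route is more economical and exploits the machinery already built, while the paper's route is self-contained and yields the auxiliary fact $2\in J(RG)$ along the way. One small remark: in your write-up you should explicitly invoke Lemma~\ref{RG gdt then R is so} before applying Corollary~\ref{p and Delta p=2 or 3}, since that corollary is stated for DT rings $R$, not for $RG$.
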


\begin{proof}
First, we show that $2 \in J(RG)$. Since $2 \in \Delta(R)$, we have $3 \in U(R) \subseteq U(RG)$. Moreover, by virtue of Lemma \ref{6 in Jacobson}, we know that $6 \in J(RG)$, detecting that $2 = 6/3 \in J(RG)$.

But Lemma \ref{G torsin} tells us that $G$ is a torsion group, and Lemma \ref{1-g2 in Delta} informs us that, for every $g \in G$, $1 - g^{2k} \in \Delta(RG)$ is valid (note that $g^k \in G$). Furthermore, since $\Delta(RG)$ is a subring, we obtain:
\[
1 + g^{2k} = 1 - g^{2k} + 2g^{2k} \in \Delta(RG) + J(RG) \subseteq \Delta(RG).
\]

But, as $\Delta(RG)$ is closed with respect to multiplication by unit elements, we extract:
\[
\sum_{i=0}^{2k} g^i = \sum_{i=0}^{k-1} g^i(1 + g^{2(k-i)}) + g^k \in \Delta(RG) + U(RG) \subseteq U(RG).
\]

Now, suppose there is an element in $G$ with odd order $p$. Then, there is $g \in G$ with $o(g) = p$. Letting $p = 2k + 1$, then
\[
(1 - g)\sum_{i=0}^{2k} g^i = 0.
\]
Since $\sum_{i=0}^{2k} g^i \in U(RG)$, we must have $1 - g = 0$, i.e., $g=1$, which is wrong. Therefore, $G$ must be a 2-group, as stated.
\end{proof}

\begin{theorem}\label{G is 3-group or elementary 2-group}
Let $RG$ be a DT ring with $3 \in \Delta(R)$, and let $G$ be a $p$-group. Then, either $G$ is a 3-group or an elementary 2-group.
\end{theorem}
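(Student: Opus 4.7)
The plan is to split on the parity of the prime $p$ and, in each case, reduce the problem to an application of Theorem~\ref{p in Delta and p-group}. Before doing so, I would record the following preliminary observation: since $3 \in \Delta(R)$ we cannot simultaneously have $2 \in \Delta(R)$, for otherwise the subring $\Delta(R)$ would contain $3-2 = 1$, which is impossible because $1 \in U(R)$ is disjoint from $\Delta(R)$. So the only prime that can possibly lie in $\Delta(R)$ from now on is $3$.

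If $p$ is odd, my strategy is to establish $1-g \in \Delta(RG)$ for every $1 \neq g \in G$ directly, so that Theorem~\ref{p in Delta and p-group} can be applied to the whole of $G$. For any such $g$ the order $o(g) = 2m+1$ is odd, $g^{2}$ still generates $\langle g \rangle$, and $g = (g^{2})^{m+1}$. Combining Lemma~\ref{1-g2 in Delta} (which gives $1-g^{2} \in \Delta(RG)$) with the factorisation
\[
1 - g \;=\; (1 - g^{2})\bigl(1 + g^{2} + \cdots + g^{2m}\bigr) \;=\; \sum_{i=0}^{m}(1-g^{2})g^{2i},
\]
and noting that each summand $(1-g^{2})g^{2i}$ lies in $\Delta(RG)$ because $g^{2i}$ is a unit and $\Delta(RG)$ is closed under right multiplication by units, the closure of $\Delta(RG)$ under addition places $1-g$ in $\Delta(RG)$. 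Theorem~\ref{p in Delta and p-group} then produces a prime $q \in \Delta(R)$ such that $G$ is a $q$-group; since $G$ is already a $p$-group and Corollary~\ref{p and Delta p=2 or 3} forces $q \in \{2,3\}$, the odd parity of $p$ pins down $p = q = 3$, so $G$ is a $3$-group.

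If $p = 2$, I would argue by contradiction that $G$ is elementary. Suppose some $g \in G$ satisfies $o(g) = 2^{k}$ with $k \geq 2$, and consider the nontrivial cyclic $2$-subgroup $H := \langle g^{2}\rangle$ of order $2^{k-1}$. Each $h \in H$ is of the form $h = (g^{j})^{2}$ for some integer $j$, so Lemma~\ref{1-g2 in Delta} yields $1-h \in \Delta(RG)$, and Lemma~\ref{Delta(RG) subset Delta(RH)} transfers this to $1-h \in \Delta(RH)$. Applying Theorem~\ref{p in Delta and p-group} to the pair $(R, H)$ produces a prime $q \in \Delta(R)$ such that $H$ is a $q$-group; but $H$ is a nontrivial $2$-group, so $q = 2$, contradicting the opening observation that $2 \notin \Delta(R)$. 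Hence every non-identity element of $G$ has order $2$, i.e.\ $G$ is an elementary $2$-group.

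The main obstacle is the $p = 2$ case, because Lemma~\ref{1-g2 in Delta} only hands us squares; for an element of order $2^{k}$ with $k \geq 2$ there is no direct way to place $1-g$ itself in $\Delta(RG)$, so Theorem~\ref{p in Delta and p-group} cannot be invoked on $G$ directly (the odd-order trick $g = (g^{2})^{m+1}$ breaks down precisely because $\gcd(2,o(g)) \neq 1$). The crux of the argument is therefore the passage to the subgroup of squares $\langle g^{2}\rangle$, where every element is literally a square and the hypothesis of Theorem~\ref{p in Delta and p-group} becomes available for free.
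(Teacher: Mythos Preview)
Your proposal is correct. Both arguments split on the parity of $p$, and in the odd case they are essentially the same: use the odd order of $g$ to upgrade Lemma~\ref{1-g2 in Delta} from $1-g^{2}\in\Delta(RG)$ to $1-g\in\Delta(RG)$, then invoke Theorem~\ref{p in Delta and p-group}. (The paper does this in one stroke via $g(1-g^{2m})=-(1-g)$ rather than your telescoping sum, but the content is identical.)

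The real divergence is the $p=2$ case. The paper first shows $3\in J(RG)$ and argues directly at the element level: from $1-g^{2^{k-1}}\in\Delta(RG)$ and $3\in J(RG)$ it produces $1+g^{2^{k-1}}\in U(RG)$, and then the factorisation $(1+g^{2^{k-1}})(1-g^{2^{k-1}})=1-g^{2^{k}}=0$ forces $g^{2^{k-1}}=1$, contradicting the minimality of $k$. You instead recycle the subgroup-of-squares manoeuvre from Theorem~\ref{p neq 2 and 3}: pass to $H=\langle g^{2}\rangle$, use Lemma~\ref{Delta(RG) subset Delta(RH)} to feed Theorem~\ref{p in Delta and p-group}, and land $2\in\Delta(R)$, which collides with $3\in\Delta(R)$. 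Your route is more uniform (both cases funnel through Theorem~\ref{p in Delta and p-group}) and never needs the preliminary fact $3\in J(RG)$; the paper's route is more self-contained in this case and avoids the subgroup restriction lemma. One small remark: your citation of Corollary~\ref{p and Delta p=2 or 3} in the odd case is redundant, since your opening observation that $\Delta(R)$ cannot contain two distinct primes already forces $q=3$ once $q\in\Delta(R)$ is known.
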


\begin{proof}
Analogously to Theorem \ref{G is 2-group}, we can establish that $3 \in J(RG)$. Now, we consider two different cases:

\medskip

\textbf{Case 1:} $p=2$. We demonstrate that, for each $g \in G$, $g^2=1$. In fact, since $G$ is a 2-group, suppose $k>1$ is the smallest positive integer such that $g^{2^k}=1$. As $2^{k-1}$ is even, there is $m \in \mathbb{N}$ with $2^{k-1}=2m$. With the help of Lemma \ref{1-g2 in Delta}, we have:
\[
1-g^{2^{k-1}} = 1-g^{2m} \in \Delta(RG).
\]
Moreover, since $3 \in J(RG)$, it must be that
\[
1+2g^{2^{k-1}} = 1-g^{2^{k-1}}+3g^{2^{k-1}} \in \Delta(RG) + \Delta(RG) \subseteq \Delta(RG).
\]
This ensures
\[
1+g^{2^{k-1}} = 1+2g^{2^{k-1}} -g^{2^{k-1}} \in \Delta(RG) + U(RG) \subseteq U(RG).
\]
However, $(1+g^{2^{k-1}})(1-g^{2^{k-1}}) = 1-g^{2^k} = 0$ and $1+g^{2^{k-1}} \in U(RG)$, so we must have $1-g^{2^{k-1}} = 0$, contradicting the minimality of $k$. Hence, $k=1$.

\medskip

\textbf{Case 2:} $p \neq 2$. Let $g^{p^k}=1$. Since $p^k$ is odd, there is $m \in \mathbb{N}$ such that $p^k=2m+1$. With the aid of Lemma \ref{1-g2 in Delta}, we write:
\[
1-g^{2m} \in \Delta(RG).
\]
Multiplying both sides by $g$ gives $1-g \in \Delta(RG)$. Then, Theorem \ref{p in Delta and p-group} allows us to infer that $G$ is a $q$-group with $q \in \Delta(R)$. However, since $3 \in \Delta(R)$ and $\Delta(R)$ cannot contain two distinct primes, we derive $q=3$. Consequently, $G$ must be a 3-group, as promised.
\end{proof}

Our next chief result and its valuable consequence are the following ones.

\begin{theorem}\label{2 in Delta RG is GDT}
Let $R$ be a DT ring with $2 \in \Delta(R)$, and let $G$ be a locally finite 2-group. Then, $RG$ is a DT ring.
\end{theorem}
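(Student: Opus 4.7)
The plan is to prove that every element of $RG$ admits a decomposition as a tripotent from $\operatorname{Tr}(R)\subseteq\operatorname{Tr}(RG)$ plus an element of $\Delta(RG)$, by reducing the task, modulo $\Delta(RG)$, to the already-available DT decomposition in $R$ itself. Two preparatory ingredients are needed. Combining $2\in\Delta(R)$ with Corollary~\ref{2 in U(R) iff 3 in J(R)}(2) yields $3\in U(R)$, while Corollary~\ref{6 in Jacobson} forces $6\in J(R)$; dividing, I obtain $2\in J(R)$. Hence Lemma~\ref{Delta(R) subset Delta(RG)} applies with $p=2$, giving $\Delta(R)G\subseteq\Delta(RG)$, and the same hypothesis simultaneously produces $\varepsilon(RG)\subseteq J(RG)$ by \cite[Lemma~2]{zhoucl}. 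Since $J(RG)\subseteq\Delta(RG)$ by the very definition of $\Delta$, the entire augmentation ideal is absorbed into $\Delta(RG)$.

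Given an arbitrary $\alpha=\sum_{g\in G}a_g g\in RG$, I set $s:=\varepsilon(\alpha)=\sum_{g}a_g\in R$. Since $R$ is a DT ring by hypothesis, I choose a representation $s=t+d_0$ with $t\in\operatorname{Tr}(R)$ and $d_0\in\Delta(R)$. The difference $\alpha-s=\sum_{g}a_g(g-1)$ lies in $\varepsilon(RG)\subseteq\Delta(RG)$ by the preparatory step, and $d_0\in\Delta(R)\subseteq\Delta(R)G\subseteq\Delta(RG)$. Because $\Delta(RG)$ is closed under addition, $d:=d_0+(\alpha-s)\in\Delta(RG)$; moreover, $t^3=t$ holds in $R$, hence also in $RG$, so $t\in\operatorname{Tr}(RG)$. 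Therefore $\alpha=t+d$ is the sought DT representation, establishing that $RG$ is a DT ring.

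There is no genuine obstacle once the auxiliary results are aligned: the whole argument is essentially the tautology $\alpha=\varepsilon(\alpha)+(\alpha-\varepsilon(\alpha))$ coupled with the DT decomposition in $R$. The only delicate step is extracting $2\in J(R)$ from $2\in\Delta(R)$, which is precisely what licenses Lemma~\ref{Delta(R) subset Delta(RG)} for the locally finite $2$-group $G$; without this passage one cannot transport the coefficient $d_0\in\Delta(R)$ up to a member of $\Delta(RG)$, and the reduction modulo $\Delta(RG)$ would collapse.
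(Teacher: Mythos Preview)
Your proof is correct and follows essentially the same route as the paper's: reduce $2\in\Delta(R)$ to $2\in J(R)$, invoke \cite[Lemma~2]{zhoucl} to place $\varepsilon(RG)$ inside $J(RG)\subseteq\Delta(RG)$, then split an arbitrary element as its augmentation plus a member of the augmentation ideal and apply the DT representation in $R$, using Lemma~\ref{Delta(R) subset Delta(RG)} to lift $\Delta(R)$ into $\Delta(RG)$. One minor quibble: your appeal to Corollary~\ref{2 in U(R) iff 3 in J(R)}(2) to extract $3\in U(R)$ from $2\in\Delta(R)$ is mis-cited, since that corollary reads $3\in U(R)\Leftrightarrow 2\in J(R)$; the conclusion you want is immediate anyway from $3=1+2\in 1+\Delta(R)\subseteq U(R)$ (and then that same corollary already gives $2\in J(R)$ directly, making the ``dividing'' step redundant).
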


\begin{proof}
Since $6 \in J(R)$ and, by assumption $2 \in J(R)$, it follows from \cite[Lemma 2]{zhoucl} that $\varepsilon(RG) \subseteq J(RG) \subseteq \Delta(RG)$.

However, for any $f = \sum_{g \in G} a_g g \in RG$, we can write
\[
f = -\sum_{g \in G} a_g(1-g) + \sum a_g \in \varepsilon(RG) + R \subseteq J(RG) + R \subseteq \Delta(RG) + R,
\]
so we may assume $RG = \Delta(RG) + R$.

Under validity of this assumption, let $f = \sum_{g \in G} a_g g \in RG$. Then, there are $d \in \Delta(RG)$ and $a \in R$ such that $f = d + a$. Since $R$ is a DT ring, suppose $a = e + d'$ is a DT representation in $R$. So, by Lemma \ref{Delta(R) subset Delta(RG)}, we have $d' \in \Delta(RG)$. Therefore,
\[
f = e + (d' + d)
\]
is a DT representation, and thus $RG$ must be a DT ring, as asserted.
\end{proof}

\begin{corollary}\label{3 in Delta RG is GDT}
Let $R$ be a DT ring with $3 \in \Delta(R)$, and let $G$ be a locally finite 3-group. Then, $RG$ is a DT ring.
\end{corollary}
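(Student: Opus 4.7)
The plan is to mimic the proof of Theorem~\ref{2 in Delta RG is GDT} almost verbatim, with the prime $3$ taking the role previously played by $2$. The argument splits naturally into three stages: first, upgrade the hypothesis $3 \in \Delta(R)$ to $3 \in J(R)$; second, use this together with the locally finite $3$-group assumption to realize $RG = \Delta(RG) + R$; third, lift a DT representation of the ``$R$-part'' to one in $RG$.

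The key arithmetic step is the first. By the very definition of $\Delta(R)$, adding the unit $-1 \in U(R)$ to $3$ yields $2 = 3 + (-1) \in U(R)$. Combining this with Corollary~\ref{6 in Jacobson} (which supplies $6 \in J(R)$ in every DT ring), I conclude $3 = 2^{-1} \cdot 6 \in J(R)$. Once $3 \in J(R)$ is secured, the hypothesis that $G$ is a locally finite $3$-group activates \cite[Lemma~2]{zhoucl} to give $\varepsilon(RG) \subseteq J(RG) \subseteq \Delta(RG)$. The telescoping identity $f = -\sum_{g \in G} a_g(1-g) + \sum_{g \in G} a_g$ then expresses any $f \in RG$ as an element of $\varepsilon(RG) + R \subseteq \Delta(RG) + R$, establishing the decomposition $RG = \Delta(RG) + R$.

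For the final stage, I would take an arbitrary $f \in RG$ and write $f = d + a$ with $d \in \Delta(RG)$ and $a \in R$; since $R$ is DT, there is a decomposition $a = e + d'$ with $e \in \operatorname{Tr}(R)$ and $d' \in \Delta(R)$. Because $3 \in J(R)$ and $G$ is a locally finite $3$-group, Lemma~\ref{Delta(R) subset Delta(RG)} applies and yields $d' \in \Delta(R)G \subseteq \Delta(RG)$, so $f = e + (d + d')$ is the desired DT representation in $RG$. The only point requiring any genuine thought is the promotion $3 \in \Delta(R) \Rightarrow 3 \in J(R)$; the three structural facts invoked thereafter --- the containment $\varepsilon(RG) \subseteq J(RG)$, the decomposition $RG = \Delta(RG) + R$, and the embedding $\Delta(R)G \subseteq \Delta(RG)$ --- have already been proved in sufficient generality to be imported wholesale, so no new technical obstacle is expected.
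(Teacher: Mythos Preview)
Your proof is correct and follows exactly the approach the paper intends: the paper's own proof simply states that the argument is ``quite similar to that of Theorem~\ref{2 in Delta RG is GDT}'' and omits the details, and your three-stage outline (upgrading $3 \in \Delta(R)$ to $3 \in J(R)$ via $2 \in U(R)$ and Corollary~\ref{6 in Jacobson}, then invoking \cite[Lemma~2]{zhoucl} and Lemma~\ref{Delta(R) subset Delta(RG)} with $p=3$) is precisely that similar argument written out in full.
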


\begin{proof}
The proof is quite similar to that of Theorem \ref{2 in Delta RG is GDT}, so we voluntarily drop off the arguments.
\end{proof}



\section{Structural Theorems}

We start here with the following preliminary technicalities. The first one is a rather suspected affirmation.

\medskip

Recall that, imitating \cite{nstr}, a ring $R$ is {\it clean}, provided $R={\rm Id}(R)+U(R)$.

\begin{lemma}\label{gdt is clean}
Let $R$ be a DT ring. Then, $R$ is a clean ring.
\end{lemma}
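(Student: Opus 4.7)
The plan is to produce, for each $r\in R$, an explicit clean decomposition $r=e+u$ with $e^{2}=e$ and $u\in U(R)$. Starting from a DT-representation $r=t+d$ (with $t\in\operatorname{Tr}(R)$ and $d\in\Delta(R)$), the natural idempotents attached to a tripotent $t$ are $t^{2}$ and $1-t^{2}$ (both are idempotent, since $t^{4}=t\cdot t^{3}=t\cdot t=t^{2}$). I would try the second of these, i.e.\ set $e:=1-t^{2}$.

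Then $r-e=(t+t^{2}-1)+d$, and the crucial observation I would record is that $v:=t+t^{2}-1$ is an involution: expanding $v^{2}$ term by term and reducing via $t^{3}=t$ and $t^{4}=t^{2}$, everything telescopes and one lands on $v^{2}=1$. In particular $v\in U(R)$, and this is the only genuine computation required.

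To finish, I would invoke the defining property of $\Delta(R)$: since $v\in U(R)$ and $d\in\Delta(R)$, one has $v+d\in U(R)$. Consequently $r-e=v+d\in U(R)$, and therefore $r=e+(v+d)$ is the sought clean representation with $e=1-t^{2}\in\operatorname{Id}(R)$.

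The main conceptual hurdle is merely choosing the correct idempotent; once one tries $1-t^{2}$, the identity $(t+t^{2}-1)^{2}=1$ does all the work, and no case distinction (for instance on whether $2$ or $3$ lies in $J(R)$, as Corollary~\ref{6 in Jacobson} might suggest) is necessary. In particular this avenue is cleaner than, and independent of, the structural decomposition of $\operatorname{Tr}(R)$ as a difference of orthogonal idempotents, which would require invertibility of $2$.
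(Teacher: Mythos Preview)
Your proof is correct and is essentially identical to the paper's argument: both choose the idempotent $1-t^{2}$ associated to the tripotent $t$, observe that $t+t^{2}-1$ (equivalently, the negative of the paper's $(1-t^{2})-t$) is an involution and hence a unit, and finish by using $U(R)+\Delta(R)\subseteq U(R)$. The only cosmetic difference is that the paper alludes to the earlier computation $((1-t^{2})-t)^{2}=1$ from Lemma~\ref{prop d,e} rather than expanding $(t+t^{2}-1)^{2}$ afresh.
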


\begin{proof}
Assume $r \in R$ and $r = e + d$ is a DT representation. Since a simple check leads to $((1 - e^2) - e) \in U(R)$ and $U(R) + \Delta(R) \subseteq U(R)$, we have
\[
r = (1 - e^2) + \left[e - (1 - e^2) + d\right],
\]
which is obviously a clean representation for $r$. Thus, $R$ is a clean ring, as expected.
\end{proof}

\begin{lemma}\label{R/J is reduced}
Let $R$ be a DT ring. Then, the factor-ring $R/J(R)$ is reduced.
\end{lemma}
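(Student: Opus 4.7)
The plan is to prove the pointwise statement: for any $a\in R$ with $a^2\in J(R)$ one has $a\in J(R)$. This is exactly reducedness of $R/J(R)$ in the square-zero case, and the conclusion for arbitrary nilpotency indices follows by iterating the square-zero case (apply it in sequence to $a^{2^{k-1}},\ldots,a$, where $2^{k}$ exceeds the nilpotency index).

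First, I will lift the information into $\Delta(R)$. Since $J(R)\subseteq\Delta(R)$, the hypothesis gives $a^2\in\Delta(R)$, and Lemma~\ref{power a then a} produces $a\in\Delta(R)$. Taking a DT decomposition $a=e+d$ with $e\in\operatorname{Tr}(R)$ and $d\in\Delta(R)$, the element $e=a-d$ lies in $\Delta(R)$ as well (since $\Delta(R)$ is a subring closed under subtraction). The identity $e(1-e)(1+e)=e-e^3=0$ combined with $1\pm e\in U(R)$ (a consequence of $e\in\Delta(R)$) lets me cancel the two units on the right and conclude $e=0$; hence $a=d\in\Delta(R)$.

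The decisive step, and the one I expect to be the main obstacle, is to promote $a\in\Delta(R)$ with $a^2\in J(R)$ to $a\in J(R)$: I must show $1-ra\in U(R)$ for every $r\in R$. Taking a DT decomposition $r=e'+d'$ and using that $d'a\in\Delta(R)$ (closure of $\Delta(R)$ under multiplication), I factor
\[
 1-ra=(1-d'a)\bigl(1-ue'a\bigr),\qquad u=(1-d'a)^{-1}\in U(R),
\]
which reduces the task to showing $1-ue'a\in U(R)$. Lemma~\ref{prop d,e}(3) supplies $2(ue'a)\in\Delta(R)$, and this immediately settles the case $2\in U(R)$ since then $ue'a=\tfrac12\cdot 2ue'a\in\Delta(R)$.

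To handle the complementary cases, my plan is to use Corollary~\ref{6 in Jacobson} ($6\in J(R)$) together with the coprimality of $2$ and $3$ to split $R/J(R)$ into its characteristic-$2$ and characteristic-$3$ components and argue in each separately: in the characteristic-$3$ component $2$ becomes a unit and the previous reduction applies directly, while in the characteristic-$2$ component the tripotent $e'$ makes $e'+(e')^2$ square-zero, and I plan to combine this nilpotence with Lemma~\ref{prop d,e}(1), the hypothesis $a^2\in J(R)$, and the Peirce decomposition through the idempotent $(e')^2$ to force $ue'a$ into $\Delta(R)$.
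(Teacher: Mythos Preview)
Your argument is complete only when $2\in U(R)$; the remaining case is left as an unexecuted ``plan'', and that plan does not close the gap. Concretely, once $2\in J(R)$ you observe that $(e'+(e')^2)^2=2(e'+(e')^2)\in J(R)$, so Lemma~\ref{power a then a} gives $e'+(e')^2\in\Delta(R)$ (and likewise $e'-(e')^2\in\Delta(R)$). Feeding this back into $ue'a$ only reduces the problem to showing $1-u(e')^2a\in U(R)$ with $(e')^2$ an idempotent, which is essentially the same obstacle you started with: you still need to absorb an idempotent-times-$\Delta$ product, and neither Lemma~\ref{prop d,e}(1) nor the Peirce decomposition through $(e')^2$ yields $u(e')^2a\in\Delta(R)$ in general. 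The hypothesis $a^2\in J(R)$ is never brought to bear in this branch. (Your second paragraph, deriving $e=0$ from a DT decomposition of $a$, is correct but redundant: Lemma~\ref{power a then a} already delivers $a\in\Delta(R)$ directly.)

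The paper bypasses any case split by invoking the lemma proved immediately before this one: for every tripotent $f$ and every $d\in\Delta(R)$ one has $fd-df\in\Delta(R)$. Applying this with $f=e'$ and $d=a$, together with $d'a-ad'\in\Delta(R)$ (subring), gives $ra-ar\in\Delta(R)$ for every $r\in R$. Then the single identity
\[
(1-ra)(1+ar)=(ar-ra)+(1-ra^{2}r)\in\Delta(R)+U(R)\subseteq U(R),
\]
using $a^{2}\in J(R)\Rightarrow ra^{2}r\in J(R)$, shows $1-ra$ is right-invertible; the symmetric computation and the standard $1-xy\leftrightarrow 1-yx$ fact finish the two-sided invertibility. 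The missing idea in your attempt is precisely this commutator control: instead of forcing $ue'a$ into $\Delta(R)$ (which is too strong), one only needs $ra-ar\in\Delta(R)$ and then exploits $a^{2}\in J(R)$ through the factorisation above.
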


\begin{proof}
Suppose $a^2 \in J(R) \subseteq \Delta(R)$. Contacting with Lemma \ref{power a then a}, we have $a \in \Delta(R)$. Now, we establish that, for every $r \in R$, $ra - ar \in \Delta(R)$. To this purpose, let $r = e + d$ be a DT representation. Thus, owing to Lemma \ref{prop d,e}, we write:
\[
ra - ar = (e + d)a - a(e + d) = (ea - ae) + (da - ad) \in \Delta(R).
\]
Since $a^2 \in J(R)$, we deduce $1 - r^2a^2 \in U(R)$, which teaches us that $1 - ra^2r \in U(R)$. Therefore,
\[
(1 - ra)(1 + ar) = (ar - ra) + (1 - ra^2r) \in \Delta(R) + U(R) \subseteq U(R).
\]
Finally, $1 - ra \in U(R)$, showing that $a \in J(R)$, as required.
\end{proof}

The next two statements are our powerful machineries necessary for attacking below the pursued equivalence between DT rings and semi-tripotent rings.

\begin{proposition}\label{Delta is ideal when 2 in U(R)}
Let $R$ be an DT ring such that $2 \in U(R)$. Then, $\Delta(R)$ is an ideal. In particular, under these conditions, $\Delta(R) = J(R)$.
\end{proposition}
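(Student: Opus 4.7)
The plan is to first establish that $\Delta(R)$ absorbs multiplication by tripotents, then use the DT decomposition to conclude absorption by arbitrary ring elements, and finally invoke the general theory of $\Delta(R)$ to upgrade the ideal to $J(R)$.

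\textbf{Step 1 (tripotent absorption).} By Lemma \ref{prop d,e}(3), for every $f \in \operatorname{Tr}(R)$ and $d \in \Delta(R)$ we have $2fd \in \Delta(R)$ and $2df \in \Delta(R)$. Since $2 \in U(R)$, so $2^{-1}\in U(R)$, and $\Delta(R)$ is closed under multiplication by units by \cite[Theorem 3]{lmr}; multiplying by $2^{-1}$ yields $fd,\,df \in \Delta(R)$ for every tripotent $f$ and every $d\in\Delta(R)$.

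\textbf{Step 2 (arbitrary absorption).} Pick $r\in R$ and $d\in\Delta(R)$. Since $R$ is a DT ring, write $r = e + d_0$ with $e\in\operatorname{Tr}(R)$ and $d_0\in\Delta(R)$. Then
\[
rd = ed + d_0 d, \qquad dr = de + d d_0.
\]
By Step 1, $ed, de \in \Delta(R)$, and since $\Delta(R)$ is a subring (so closed under multiplication among its elements), $d_0 d, dd_0 \in \Delta(R)$. Closure of $\Delta(R)$ under addition then gives $rd,\,dr \in \Delta(R)$. This proves $\Delta(R)$ is a two-sided ideal of $R$.

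\textbf{Step 3 (identification with $J(R)$).} The inclusion $J(R)\subseteq \Delta(R)$ is the definition. Conversely, by \cite[Theorem 3]{lmr}, every element of $\Delta(R)$ is quasi-invertible (i.e.\ $1+x\in U(R)$ for all $x\in\Delta(R)$). Once $\Delta(R)$ is an ideal, this quasi-invertibility, applied to $rx$ for arbitrary $r\in R$, shows that $\Delta(R)$ is a quasi-regular ideal and hence lies in $J(R)$. Combining inclusions, $\Delta(R) = J(R)$.

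The main obstacle is really just Step 2: we need the DT decomposition to reduce multiplication by a general $r$ to multiplication by a tripotent plus an element already in $\Delta(R)$. Once Step 1 makes $\Delta(R)$ absorb tripotents (this is exactly where the hypothesis $2\in U(R)$ is used to convert the $2fd\in\Delta(R)$ of Lemma \ref{prop d,e}(3) into genuine absorption), everything else is formal: additive closure, subring closure, and the Leroy--Matczuk characterization of $\Delta(R)$ as the largest Jacobson-radical subring closed under unit-multiplication.
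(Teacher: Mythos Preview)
Your proof is correct and follows essentially the same approach as the paper: both use Lemma~\ref{prop d,e}(3) together with $2\in U(R)$ to obtain $fd\in\Delta(R)$ for tripotents $f$, then decompose an arbitrary $r$ as tripotent plus $\Delta(R)$-element to conclude. Your Step~2 is in fact slightly cleaner than the paper's version, which also writes a DT decomposition $rd=e+b$ and argues $e=0$ --- an unnecessary detour once $rd=fd+b'd\in\Delta(R)$ is already established; and your Step~3 fills in the justification for $\Delta(R)=J(R)$, which the paper leaves implicit.
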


\begin{proof}
Since $\Delta(R)$ is closed under addition, it is sufficient to show that, for any $d \in \Delta(R)$ and $r \in R$, $rd, dr \in \Delta(R)$. To this aim, assume that $rd = e + b$ and $r = f + b'$ are two DT representations. So, according to Lemma \ref{prop d,e}, we have $2fd \in \Delta(R)$. But, as $2 \in U(R)$, it follows from \cite[Lemma 1(2)]{lmr} that $fd \in \Delta(R)$. Thus, we find that
$$rd = e + b = fd + b'd  \Longrightarrow  e = fd+ b'd - b \in \Delta(R).$$
So, $e^2 \in \Delta(R) \cap Id(R) = \{0\}$ giving that $e = 0$. Consequently, $rd = b \in \Delta(R)$.

Similarly, it can be shown that $dr \in \Delta(R)$, thus proving the claim.
\end{proof}

\begin{proposition}\label{R is sum idempotent and delta}
Let $R$ be a DT ring with $3 \in U(R)$. Then, for any $a \in R$, we have $a = f + b$, where $f = f^2 \in R$ and $b \in \Delta(R)$.
\end{proposition}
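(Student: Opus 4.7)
The plan is to start from a DT representation $a = e + d$ (with $e \in \operatorname{Tr}(R)$ and $d \in \Delta(R)$) and to exchange the tripotent $e$ for the genuine idempotent $e^2$, absorbing the correction $e-e^2$ into $\Delta(R)$. The rewriting
\[
a \;=\; e^2 + \bigl((e-e^2) + d\bigr)
\]
then delivers the desired decomposition, since $\Delta(R)$ is additively closed; so everything reduces to showing $e-e^2 \in \Delta(R)$.

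First I would verify that $e^2$ is an idempotent: from $e^3=e$ one gets $(e^2)^2 = e^4 = e\cdot e^3 = e^2$. Next I would square the error term and use the tripotent identity to simplify,
\[
(e-e^2)^2 \;=\; e^2 - 2e^3 + e^4 \;=\; 2e^2 - 2e \;=\; -2(e-e^2).
\]
Now I would bring in the hypothesis $3 \in U(R)$: by Corollary~\ref{2 in U(R) iff 3 in J(R)} this forces $2 \in J(R)$, and since $J(R)$ is a two-sided ideal contained in $\Delta(R)$, the identity above puts $(e-e^2)^2$ into $J(R) \subseteq \Delta(R)$. Finally, Lemma~\ref{power a then a} is precisely the tool that promotes square membership in $\Delta(R)$ to membership in $\Delta(R)$ inside a DT ring, yielding $e-e^2 \in \Delta(R)$.

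Putting the pieces together, $f := e^2$ is idempotent and $b := (e-e^2) + d$ lies in $\Delta(R)$, so $a = f + b$ as required. I do not foresee any real obstacle: the only non-routine step is the passage $3 \in U(R) \Rightarrow 2 \in J(R) \Rightarrow (e-e^2)^2 \in \Delta(R)$, and Lemma~\ref{power a then a} is tailor-made to finish the job. The same strategy explains conceptually why a tripotent plus $\Delta$ decomposition upgrades to an idempotent plus $\Delta$ decomposition as soon as $2$ is topologically small (in $J(R)$).
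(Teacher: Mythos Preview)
Your proof is correct and follows essentially the same route as the paper: write $a = e^2 + (e-e^2) + d$, use $3\in U(R)\Rightarrow 2\in J(R)$ (Corollary~\ref{2 in U(R) iff 3 in J(R)}) to get $(e-e^2)^2 = -2(e-e^2)\in J(R)$, and then apply Lemma~\ref{power a then a}. The only difference is cosmetic---you spell out that $e^2$ is idempotent and expand the square explicitly, while the paper leaves these as implicit one-liners.
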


\begin{proof}
Choose $a \in R$ and suppose $a = e + d$ is a DT representation. Hence,
\[
a - e^2 = (e - e^2) + d.
\]
Since $3 \in U(R)$, in accordance with Lemma \ref{2 in U(R) iff 3 in J(R)}, $2 \in J(R)$. Therefore,
\[
(e - e^2)^2 = -2(e - e^2) \in J(R),
\]
and looking at Lemma \ref{power a then a}, we obtain $e - e^2 \in \Delta(R)$. Consequently, $a - e^2 \in \Delta(R)$, which concludes the assertion.
\end{proof}

A ring \(R\) is termed a {\it DI ring} if each element \(r \in R\) can be expressed as \(r = e + b\), where \(e\) is an idempotent and \(b \in \Delta(R)\). Thereby, all DI rings are certainly DT rings.

Recall also that a {\it \(\Delta U\) ring} is defined by the property \(1 + \Delta(R) = U(R)\) (see cf. \cite{kkqt}).

\begin{lemma}\label{GDI is DU}
All DI rings are \(\Delta U\).
\end{lemma}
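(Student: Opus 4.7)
The plan is to prove the two inclusions $1+\Delta(R)\subseteq U(R)$ and $U(R)\subseteq 1+\Delta(R)$ separately.

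The first inclusion is essentially tautological and needs no DI hypothesis: since $1\in U(R)$, the defining property of $\Delta(R)$ applied with $u=1$ gives $b+1\in U(R)$ for every $b\in\Delta(R)$, so $1+\Delta(R)\subseteq U(R)$.

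For the reverse inclusion, I would pick an arbitrary $u\in U(R)$ and use the DI hypothesis to write $u=e+b$ with $e\in\operatorname{Id}(R)$ and $b\in\Delta(R)$. The key observation is that $e=u-b$ is simultaneously an idempotent and a unit: indeed, $\Delta(R)$ is a subring and hence closed under negation, so $-b\in\Delta(R)$, and then $e=u+(-b)\in U(R)+\Delta(R)\subseteq U(R)$ by the very definition of $\Delta(R)$. Since the only element of $R$ that is at once idempotent and a unit is $1$, we conclude $e=1$, and therefore $u=1+b\in 1+\Delta(R)$.

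Combining both directions yields $U(R)=1+\Delta(R)$, which is exactly the $\Delta U$ property. There is no real obstacle here — the whole argument is essentially a one-liner once one notices the ``idempotent unit must be $1$'' trick; the only minor point to verify carefully is that $u-b\in U(R)$, which as noted above is immediate from the definition of $\Delta(R)$ together with the fact that $\Delta(R)$ is an additive subgroup.
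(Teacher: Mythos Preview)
Your proof is correct and follows essentially the same approach as the paper: write a unit $u=e+b$ via the DI decomposition, observe that $e=u-b\in U(R)+\Delta(R)\subseteq U(R)$, and conclude $e\in U(R)\cap\operatorname{Id}(R)=\{1\}$. The only difference is that you spell out the trivial inclusion $1+\Delta(R)\subseteq U(R)$, which the paper leaves implicit.
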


\begin{proof}
For any unit \(u \in U(R)\) with DI decomposition \(u = e + d\), we observe that \[e = u - d \in U(R) + \Delta(R) \subseteq U(R) \cap Id(R) = \{1\},\] as required.
\end{proof}

Likewise, we recollect the following result.

\begin{lemma}\cite[Proposition 2.3]{kkqt} \label{sum unit}
A ring \(R\) is \(\Delta U\) if, and only if, \(U(R) + U(R) \subseteq \Delta(R)\) is tantamount to \(U(R) + U(R) = \Delta(R)\).
\end{lemma}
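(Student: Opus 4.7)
The statement asserts the equivalence of three conditions: (a) $R$ is $\Delta U$, (b) $U(R)+U(R)\subseteq \Delta(R)$, and (c) $U(R)+U(R)=\Delta(R)$. My plan is to run a short cycle of implications using only the defining properties of $\Delta(R)$ recalled in the introduction, in particular that $\Delta(R)$ is additively closed and that $x\in\Delta(R)$ is equivalent to $x+u\in U(R)$ for every $u\in U(R)$.

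First, I would prove (a) $\Rightarrow$ (b). Assuming $1+\Delta(R)=U(R)$, every unit $u$ satisfies $u-1\in\Delta(R)$. Applying this to the unit $-u$ and using that $\Delta(R)$ is closed under negation, I also get $u+1\in\Delta(R)$. Then, for arbitrary $u,v\in U(R)$, the decomposition
\[
u+v=(u+1)+(v-1)\in \Delta(R)+\Delta(R)\subseteq\Delta(R)
\]
gives the desired inclusion.

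Next, I would handle (b) $\Rightarrow$ (c). The key observation is that the reverse inclusion $\Delta(R)\subseteq U(R)+U(R)$ is free and requires no hypothesis: for every $d\in\Delta(R)$, the defining property of $\Delta(R)$ applied to the unit $-1$ yields $d-1\in U(R)$, so
\[
d=(d-1)+1\in U(R)+U(R).
\]
Combined with (b), this gives equality.

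Finally, (c) $\Rightarrow$ (a) (equivalently, (b) $\Rightarrow$ (a)) follows directly: for any $u\in U(R)$, we have $u-1=u+(-1)\in U(R)+U(R)\subseteq\Delta(R)$, hence $U(R)\subseteq 1+\Delta(R)$; the reverse inclusion $1+\Delta(R)\subseteq U(R)$ is immediate from the definition of $\Delta(R)$ with $u=1$.

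There is no real obstacle here; the whole argument rests on the one-line observation that $d\in\Delta(R)$ implies both $d\pm 1\in U(R)$, which lets one freely translate between unit-sum and $\Delta$-sum decompositions. The only place one has to be a little careful is in (a) $\Rightarrow$ (b), where it is tempting to write $u+v=(u-1)+(v-1)+2$ and then worry about $2$; replacing this with the symmetric $(u+1)+(v-1)$ avoids the issue entirely.
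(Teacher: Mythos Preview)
Your argument is correct. The cycle (a) $\Rightarrow$ (b) $\Rightarrow$ (c) $\Rightarrow$ (a) goes through exactly as you describe, and the trick of writing $u+v=(u+1)+(v-1)$ to avoid any issue with the element $2$ is the right move.

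Note, however, that the paper does not supply its own proof of this lemma: it is quoted verbatim from \cite[Proposition~2.3]{kkqt} and stated without argument. So there is nothing to compare against beyond the original reference; your short proof would serve perfectly well as a self-contained justification here.
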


A ring \(R\) is called {\it uniquely clean} if every element has a unique expression as the sum of an idempotent and a unit \cite{nzu}.

\medskip

We are prepared to prove the following.

\begin{corollary}\label{cor 2}
For any ring \(R\), the following two conditions are equivalent:

(1) \(R\) is uniquely clean;

(2) \(R\) is DI with central idempotents.
\end{corollary}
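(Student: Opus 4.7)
The plan for this corollary naturally splits along the two implications.

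For (1) $\Rightarrow$ (2), I would appeal to the classical Nicholson-Zhou structure theorem from \cite{nzu}, which characterizes uniquely clean rings as those rings that are abelian (every idempotent is central), whose factor $R/J(R)$ is Boolean, and in which idempotents lift modulo $J(R)$. Granted this, for any $r \in R$ the image $\bar r$ is an idempotent in the Boolean quotient $R/J(R)$, and lifts to an idempotent $e \in R$ with $r - e \in J(R) \subseteq \Delta(R)$, giving the required DI representation $r = e + (r - e)$; the centrality hypothesis of (2) is inherited from the abelian property.

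For the converse (2) $\Rightarrow$ (1), first observe that $R$ is clean by Lemma \ref{gdt is clean}, so only uniqueness of the clean decomposition needs checking. Suppose $r = e + u = f + v$ with $e, f \in \operatorname{Id}(R)$ and $u, v \in U(R)$. Combining Lemma \ref{GDI is DU} with Lemma \ref{sum unit} yields $U(R) + U(R) \subseteq \Delta(R)$, so $e - f = v - u \in \Delta(R)$. Since $e$ and $f$ are central they commute, and a direct expansion gives
\[
(e - f)^2 = e + f - 2ef = e(1 - f) + f(1 - e),
\]
exhibiting $(e - f)^2$ as the sum of two orthogonal idempotents, hence itself an idempotent. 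But $(e - f)^2 \in \Delta(R)$ because $\Delta(R)$ is a subring, and any idempotent in $\Delta(R)$ must vanish (as was already exploited in Lemma \ref{power a then a}). So $(e - f)^2 = 0$, i.e., $e + f = 2ef$. Multiplying by $e$ forces $e^2 + ef = 2e^2 f$, so $e = ef$; multiplying by $f$ forces $f = ef$ symmetrically. Hence $e = f$, and therefore $u = v$, so the decomposition is unique.

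The principal obstacle is really the direction (1) $\Rightarrow$ (2), because it draws on an external structure theorem rather than being internal to the machinery developed here; a fully self-contained derivation would essentially recover Nicholson-Zhou along the way. By contrast, the direction (2) $\Rightarrow$ (1) reduces cleanly to the $\Delta U$ framework of Lemmas \ref{GDI is DU} and \ref{sum unit} coupled with the centrality hypothesis, with the key trick being to package $(e-f)^2$ as a sum of orthogonal idempotents so that idempotence of $\Delta(R)$-elements (which must be $0$) can be brought to bear.
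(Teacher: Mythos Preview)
Your proof is correct and follows essentially the same route as the paper: both directions invoke Nicholson--Zhou for $(1)\Rightarrow(2)$, and for $(2)\Rightarrow(1)$ both arguments pass through the $\Delta U$ property (Lemmas \ref{GDI is DU} and \ref{sum unit}) to place $e-f$ in $\Delta(R)$, then force $(e-f)^2\in\Delta(R)\cap\operatorname{Id}(R)=\{0\}$. The only cosmetic differences are that the paper obtains cleanness directly by writing $a+1=e+d$ and hence $a=e+(d-1)$ rather than routing through Lemma \ref{gdt is clean}, and it concludes $e=f$ via the tripotent identity $(e-f)^3=e-f$ instead of your algebraic manipulation of $e+f=2ef$.
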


\begin{proof}
\((1) \Rightarrow (2)\): Observe that the unique cleanness ensures that all idempotents are central (see, for instance, \cite[Lemma 4]{nzu}), and that the decomposition \(a = e + d\) with \(d \in J(R) \subseteq \Delta(R)\) is fulfilled (see, for example, \cite[Theorem 20]{nzu}).

\((2) \Rightarrow (1)\): Given \(a \in R\), and consider \(a+1 = e + d\). Then, \(a = e + (d-1)\), that is a clean representation.

Suppose now \(e + u = f + v\) are two clean representations. Thus, Lemma \ref{sum unit} shows that \(e - f = v - u \in \Delta(R)\). Since all idempotents are central, it follows that \(e - f = (e-f)^3\), and so \[(e-f)^2 \in \Delta(R) \cap Id(R) = \{0\}.\] Therefore, \[e - f = (e-f)^3 = (e-f)(e-f)^2 = 0,\] whence \(e = f\).
\end{proof}

A fundamental result in ring theory states that a ring is Boolean precisely when it can be represented as a sub-direct product of copies of $\mathbb{Z}_2$. In their work \cite{css}, Chen and Sheibani introduced the concept of a Yaqub ring, defining it as a sub-direct product of copies of $\mathbb{Z}_3$. They established the following characterization: a ring $R$ is Yaqub if, and only, if the element 3 is nilpotent in $R$ and $R$ satisfies the tripotent property.

\medskip

We are now in a position to proceed by proving the following main statement.

\begin{theorem}\label{boolean and yaqub}
Assume \(R\) is a DT ring. Then, \(R/J(R) \cong R_1 \times R_2\), where \(R_1\) is either zero or a non-zero Boolean ring, and \(R_2\) is either zero or a non-zero Yaqub ring.
\end{theorem}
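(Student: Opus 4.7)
The plan is to reduce to the semi-primitive quotient $\bar R := R/J(R)$, which is reduced by Lemma \ref{R/J is reduced} and DT by Lemma \ref{lemma 0}(2). Since $6 \in J(R)$ by Corollary \ref{6 in Jacobson}, the images $\bar 3$ and $\bar 4$ become orthogonal central idempotents of $\bar R$ (one checks $\bar 3{}^2 = \bar 9 = \bar 3$, $\bar 4{}^2 = \bar{16} = \bar 4$, $\bar 3\bar 4 = \bar{12} = 0$, $\bar 3 + \bar 4 = \bar 7 = \bar 1$), producing a ring direct product $\bar R \cong R_1 \times R_2$ with $R_1 := \bar 3 \bar R$ of characteristic $2$ and $R_2 := \bar 4 \bar R$ of characteristic $3$. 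Both factors inherit reducedness, $J = 0$, and the DT property (Lemma \ref{lemma 0}(1)), so the task reduces to identifying $R_1$ as Boolean and $R_2$ as Yaqub.

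The Yaqub factor is the easier: characteristic $3$ makes $2$ a unit in $R_2$ (its square equals $\bar 4 = 1_{R_2}$), whence Proposition \ref{Delta is ideal when 2 in U(R)} forces $\Delta(R_2) = J(R_2) = 0$. The DT decomposition then collapses to $a = e$ with $e^3 = e$, so $R_2$ is tripotent, and together with $3 = 0$ this is exactly a Yaqub ring.

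For $R_1$, Proposition \ref{Delta is ideal when 2 in U(R)} is unavailable (since $2 = 0$), but $3 = 1 \in U(R_1)$, so Proposition \ref{R is sum idempotent and delta} upgrades the DT decomposition to a DI decomposition: every $a \in R_1$ is $e + b$ with $e$ an idempotent (central, by reducedness) and $b \in \Delta(R_1)$. The critical step is to show $\Delta(R_1) = 0$. Given $d \in \Delta(R_1)$ and an arbitrary $r \in R_1$, I would expand $r = e + d'$ via DI and split $rd = ed + d'd$; the summand $d'd$ lies in $\Delta(R_1)$ because $\Delta(R_1)$ is a subring. For $ed$, centrality of $e$ allows me to rewrite $1 - u(ed) = e(1 - ud) + (1-e)$ for any $u \in U(R_1)$, and since $ud \in \Delta(R_1)$ already makes $1 - ud$ a unit of $R_1$, the element $e(1 - ud)^{-1} + (1-e)$ is a two-sided inverse in the product decomposition $R_1 \cong eR_1 \times (1-e)R_1$. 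Hence $ed \in \Delta(R_1)$, so $rd \in \Delta(R_1)$. Specializing to $u = 1$ gives $1 - rd \in U(R_1)$ for every $r$, forcing $d \in J(R_1) = 0$. With $\Delta(R_1) = 0$, every element of $R_1$ is tripotent, and in a reduced ring of characteristic $2$ the identity $e^3 = e$ becomes $e(e-1)^2 = 0$, hence $e^2 = e$, so $R_1$ is Boolean.

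The main obstacle is this final $R_1$ argument: one must substitute a hands-on computation for the missing prime-$3$ analogue of Proposition \ref{Delta is ideal when 2 in U(R)}, using reducedness to centralize the idempotent in the DI decomposition and thereby upgrade $\Delta(R_1)$ into a two-sided ideal that is absorbed by the Jacobson radical. Everything else is routine bookkeeping around Lemmas \ref{lemma 0} and \ref{R/J is reduced}, Corollary \ref{6 in Jacobson}, and the two key Propositions \ref{Delta is ideal when 2 in U(R)} and \ref{R is sum idempotent and delta}.
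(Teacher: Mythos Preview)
Your argument is correct and follows the same global architecture as the paper: pass to $\bar R=R/J(R)$, use $6\in J(R)$ to split $\bar R$ as a product of a characteristic-$2$ piece $R_1$ and a characteristic-$3$ piece $R_2$, then identify each factor using Propositions~\ref{Delta is ideal when 2 in U(R)} and~\ref{R is sum idempotent and delta}. The differences are in the endgame. For $R_2$ the paper shows $\Delta(R_2)=J(R_2)$, concludes $R_2$ is semi-tripotent, and then invokes \cite[Theorem~3.5]{kyzr} to peel off a Boolean--Yaqub decomposition whose Boolean part must vanish; you instead use $J(R_2)=0$ directly, so $\Delta(R_2)=0$ and tripotency plus $3=0$ give Yaqub in one line. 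For $R_1$ the paper goes through Corollary~\ref{cor 2} (DI with central idempotents $\Rightarrow$ uniquely clean) and then cites \cite[Theorem~19]{nzu}; you replace this by a direct proof that $\Delta(R_1)$ is a left ideal (exploiting centrality of the idempotent in the DI decomposition), whence $\Delta(R_1)\subseteq J(R_1)=0$. Your route is more self-contained, avoiding external citations entirely at the cost of the short hands-on computation you flag.

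Two minor remarks. First, once you have $\Delta(R_1)=0$ the DI decomposition already makes every element of $R_1$ an \emph{idempotent}, so $R_1$ is Boolean on the spot; your detour through tripotency and the identity $e(e-1)^2=0$ is unnecessary (though correct: $(e-e^2)^2=e^2-2e^3+e^4=2(e^2-e)=0$ in characteristic~$2$, and reducedness finishes). Second, the passage ``$1-rd\in U(R_1)$ for every $r$, forcing $d\in J(R_1)$'' is exactly the left-quasi-regular characterization of the Jacobson radical, so no further justification is needed there.
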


\begin{proof}
Knowing Lemma \ref{6 in Jacobson}, we have \(6 \in J(R)\). Set \(\bar{R} := R/J(R)\). Next, with the Chinese Remainder Theorem in hand, we write \(\bar{R} \cong R_1 \times R_2\), where \(R_1 := \bar{R}/2\bar{R}\) and \(R_2 := \bar{R}/3\bar{R}\). Since \(R\) is a DT ring, Lemma \ref{lemma 0}(2) assures that \(\bar{R}\) is also a DT ring. Thus, again Lemma \ref{lemma 0}(1) illustrates that \(R_1\) is a DT ring. Furthermore, since \(2 = 0\) in \(R_1\), we get \(3 \in U(R_1)\). Hence, Proposition \ref{R is sum idempotent and delta} enables us that \(R_1\) is a DI ring. Moreover, Lemma \ref{R/J is reduced} gives that \(R_1\) is reduced, and thus all idempotents in \(R_1\) are central. Therefore, Corollary \ref{cor 2} is applicable to derive that \(R_1\) is a uniquely clean ring. Note that, because \(J(\bar{R}) = 0\), it must be that \(J(R_1) = 0\). Finally, a utilization of \cite[Theorem 19]{nzu} infers that \(R_1\) is a Boolean ring, as wanted.

On the other hand, since $3=0$ in $R_2$, we know $2 \in U(R_2)$. Therefore, with Proposition \ref{Delta is ideal when 2 in U(R)} in mind, we conclude that $\Delta(R_2)=J(R_2)$, which demonstrates that $R_2$ is a semi-tripotent ring. Next, taking into account \cite[Theorem 3.5]{kyzr}, we have the isomorphism $R_2/J(R_2)\cong S_1 \times S_2$, where $S_1$ is either zero or a Boolean ring, and $S_2$ is either zero or a Yaqub ring. Since $J(\bar{R})=0$, it follows that $J(R_2)=(0)$. Besides, because $3 \in U(R_2)$, we deduce that $S_1=(0)$ and, consequently, $R_2$ is a Yaqub ring, as desired.
\end{proof}

As a valuable consequence, we now extract the so-desired equivalence between the classes of DT rings and semi-tripotent rings, as noticed above. 

\begin{corollary}\label{gdt iff semi tripotent}
Let $R$ be a ring. Then, the following two conditions are equivalent:

(1) $R$ is a DT ring.

(2) $R$ is a semi-tripotent ring.
\end{corollary}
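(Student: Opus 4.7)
The plan is to dispatch $(2) \Rightarrow (1)$ immediately, since the inclusion $J(R) \subseteq \Delta(R)$ makes any semi-tripotent representation $r = t + j$ automatically a DT representation. The substance lies entirely in $(1) \Rightarrow (2)$, and my strategy there will be to show that in a DT ring every DT representation $r = e + d$ automatically satisfies $d \in J(R)$, rather than merely $d \in \Delta(R)$.

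To do this I would pass to $\bar R := R/J(R)$ and aim to show the image $\bar d$ vanishes. Theorem~\ref{boolean and yaqub} decomposes $\bar R \cong R_1 \times R_2$ with $R_1$ Boolean (or zero) and $R_2$ Yaqub (or zero), so the key computation reduces to verifying $\Delta(\bar R) = 0$. Using the routine fact that $\Delta$ distributes over finite direct products, it suffices to show $\Delta(R_1) = \Delta(R_2) = 0$. For the Boolean factor this is immediate since its only unit is $1$. For the Yaqub factor, given $d \in \Delta(R_2)$ both $1 - d$ and $1 + d$ are units, so $1 - d^2$ is a unit; combined with the tripotent identity $d^3 = d$ this gives $d(1 - d^2) = 0$, forcing $d = 0$.

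The last link is to show that $\bar d \in \Delta(\bar R)$ whenever $d \in \Delta(R)$. This follows from the standard fact that any lift $u \in R$ of a unit $\bar u \in U(\bar R)$ is itself a unit of $R$: projecting $1 - du \in U(R)$ then yields $1 - \bar d\,\bar u \in U(\bar R)$, so $\bar d \in \Delta(\bar R) = 0$. Hence $d \in J(R)$, which shows that the original DT decomposition $r = e + d$ is already a semi-tripotent decomposition.

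There is no genuine obstacle here; Theorem~\ref{boolean and yaqub} does the heavy lifting. The only care-points are the distributivity of $\Delta$ over finite direct products and the compatibility of $\Delta$ with the projection $R \to R/J(R)$, both of which are essentially one-line observations built from the definition of $\Delta$ and the lifting of units.
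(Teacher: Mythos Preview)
Your argument is correct and, while it leans on Theorem~\ref{boolean and yaqub} just as the paper does, it diverges from there in a cleaner direction. The paper extracts from the Boolean $\times$ Yaqub decomposition the consequence $a - a^3 \in J(R)$ for all $a$, then separately invokes Lemma~\ref{gdt is clean} to obtain idempotent lifting modulo $J(R)$, and finally quotes the external characterization \cite[Theorem~3.5]{kyzr} to conclude semi-tripotency. You instead use the decomposition to compute $\Delta(R/J(R)) = 0$ directly and, via the observation that the quotient map $R \to R/J(R)$ carries $\Delta(R)$ into $\Delta(R/J(R))$, obtain the stronger fact $\Delta(R) = J(R)$; this immediately upgrades every DT decomposition to a semi-tripotent one without recourse to idempotent lifting or the external citation. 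Both arguments are short once Theorem~\ref{boolean and yaqub} is available, but yours is more self-contained and yields the identity $\Delta(R) = J(R)$ as an explicit byproduct.
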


\begin{proof}
$(1) \Rightarrow (2)$: An application of Theorem \ref{boolean and yaqub} gives that, for every $a \in R$, the containment $a-a^3 \in J(R)$ holds. Moreover, Lemma \ref{gdt is clean} shows that all idempotents lift modulo $J(R)$. Therefore, a usage of \cite[Theorem 3.5]{kyzr} allows to detect that $R$ is a semi-tripotent ring.

$(2) \Rightarrow (1)$: This direction is immediate since we always have $J(R) \subseteq \Delta(R)$.
\end{proof}

We now continue with certain element-wise considerations.

\begin{theorem}\label{avali T GDT}
Let $R$ be a ring. Then, the following five conditions are equivalent:

(1) $R$ is a DT ring.

(2) $R$ is a semi-tripotent ring.

(3) For each $a \in R$, $a = e+f+j$, where $e,f \in \text{Id}(R)$, $ef=fe$ and $j \in J(R)$.

(4) For each $a \in R$, $a = e-f+j$, where $e,f \in \text{Id}(R)$, $ef=fe$ and $j \in J(R)$.

(5) For each $a \in R$, $a = e-f+j$, where $e,f \in \text{Id}(R)$, $ef=0=fe$ and $j \in J(R)$.
\end{theorem}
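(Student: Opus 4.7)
Since $(1)\Leftrightarrow(2)$ is already provided by Corollary~\ref{gdt iff semi tripotent}, the real task is to connect (2) with each of (3), (4), (5). The easy directions proceed by direct computation. First, $(5)\Rightarrow(4)$ is immediate because orthogonality implies commutativity. For $(4)\Rightarrow(2)$, a short calculation using $ef=fe$, $e^2=e$, $f^2=f$ (and the derived identity $fef=ef$) gives $(e-f)^2=e-2ef+f$ and then $(e-f)^3=e-f$, so $e-f$ is tripotent and $a=(e-f)+j$ is a semi-tripotent representation. Analogously, $(3)\Rightarrow(2)$ will be based on the identity $(e+f)^3=e+f+6ef$; since Corollary~\ref{6 in Jacobson} places $6$ in $J(R)$, the class $\overline{e+f}$ is tripotent in $R/J(R)$, and a tripotent lift of $\bar a$ back to $R$, obtained via orthogonal idempotent lifts available from the cleanness of $R$ (Lemma~\ref{gdt is clean}), will complete the argument.

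For the substantive direction $(2)\Rightarrow(5)$, together with $(2)\Rightarrow(3)$, the plan is to pass to the quotient $R/J(R)\cong R_1\times R_2$ furnished by Theorem~\ref{boolean and yaqub}, where $R_1$ is Boolean and $R_2$ is Yaqub. In the Boolean factor every element is itself idempotent, so the trivial decomposition $\bar a_1=\bar a_1-0=\bar a_1+0$ serves. In the Yaqub factor, where $3=0$ forces $2$ to be a unit with $2^{-1}=2$, the tripotent $\bar a_2$ admits the orthogonal decomposition $\bar a_2=\bar e_2-\bar f_2$ with $\bar e_2:=-(\bar a_2+\bar a_2^2)$ and $\bar f_2:=\bar a_2-\bar a_2^2$ (one checks $\bar e_2^2=\bar e_2$, $\bar f_2^2=\bar f_2$, $\bar e_2\bar f_2=0$ and $\bar e_2-\bar f_2=-2\bar a_2=\bar a_2$), and equally the commuting decomposition $\bar a_2=\bar a_2^2+(\bar a_2-\bar a_2^2)$, both summands being idempotent in characteristic three and commuting automatically. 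Assembling componentwise yields both an orthogonal and a commuting decomposition of every $\bar a$ in $R/J(R)$.

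The last—and most delicate—step is to lift these quotient decompositions back to $R$ while preserving orthogonality or commutativity, and this is the main obstacle I expect. My strategy is: first lift $\bar e$ to an idempotent $e\in R$ using cleanness (Lemma~\ref{gdt is clean}); next, exploit the identity $\bar f=\bar e\bar f\bar e+(1-\bar e)\bar f(1-\bar e)$, which holds precisely because $\bar e$ and $\bar f$ commute in $R/J(R)$, to lift $\bar f$ piecewise inside the corner rings $eRe$ and $(1-e)R(1-e)$, both of which are themselves clean. The resulting idempotent $f\in R$ commutes with $e$ and reduces to $\bar f$, furnishing conditions (3) and (4). For (5) the orthogonality $\bar e\bar f=0$ forces $\bar f=(1-\bar e)\bar f(1-\bar e)$, so the lift $f$ can be chosen entirely inside $(1-e)R(1-e)$, automatically giving $ef=fe=0$. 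In each case, setting $j:=a-(e\pm f)$ one has $\bar j=0$ in $R/J(R)$, hence $j\in J(R)$, completing the proof.
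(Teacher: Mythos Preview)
Your argument contains a genuine circularity in the implication $(3)\Rightarrow(2)$. You invoke Corollary~\ref{6 in Jacobson} to get $6\in J(R)$ and Lemma~\ref{gdt is clean} to get cleanness (hence idempotent lifting), but both of those results have ``$R$ is a DT ring'' as their hypothesis, and via $(1)\Leftrightarrow(2)$ that is exactly the conclusion you are trying to reach from~(3). Nothing in your write-up derives $6\in J(R)$ or idempotent lifting from condition~(3) alone. (It \emph{can} be done --- for instance, writing $3=g+h$ modulo $J(R)$ with commuting idempotents and comparing $3$, $3^2$, $3^3$ forces $6\in J(R)$ --- but you did not carry this out, and you would still owe a separate lifting argument independent of Lemma~\ref{gdt is clean}.)

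The paper avoids this entirely with two one-line algebraic tricks. For $(3)\Rightarrow(4)$ it simply applies~(3) to $1-a$: if $1-a=e+f+j$ then $a=(1-e)-f+(-j)$, with $1-e$ and $f$ commuting idempotents. For $(4)\Rightarrow(5)$ it rewrites $e-f=e(1-f)-f(1-e)$; since $ef=fe$, the elements $e(1-f)$ and $f(1-e)$ are \emph{orthogonal} idempotents. Combined with your own computation $(5)\Rightarrow(4)\Rightarrow(2)$ (which is correct), this closes the cycle of implications purely algebraically, never touching $J(R)$, the structure theorem, or any lifting. The equivalence $(2)\Leftrightarrow(3)$ is handled in the paper by citing \cite[Theorem~3.7]{kyzr}. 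Your route to $(2)\Rightarrow(3)$ and $(2)\Rightarrow(5)$ through Theorem~\ref{boolean and yaqub} and corner-ring lifting is correct in outline, but it is far heavier machinery than the problem requires.
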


\begin{proof}
$(1) \Leftrightarrow (2)$: This follows directly from Corollary \ref{gdt iff semi tripotent}.

$(2) \Leftrightarrow (3)$: The equivalence is established in \cite[Theorem 3.7]{kyzr}.

$(3) \Rightarrow (4)$: For any $a \in R$, there are $e,f \in \text{Id}(R)$ and $j \in J(R)$ such that $1-a = e+f+j$, which yields $a = (1-e)-f-j$.

$(4) \Rightarrow (5)$: For any $a \in R$, there are $e,f \in \text{Id}(R)$ and $j \in J(R)$ such that $$a = e-f+j = e(1-f)-f(1-e)+j,$$ where $e(1-f)$ and $f(1-e)$ are orthogonal idempotents.

$(5) \Rightarrow (1)$: For any $a \in R$, there are $e,f \in \text{Id}(R)$ with $ef=0=fe$ and $j \in J(R)$ such that $a = e-f+j$. Putting $g:=e-f$, it is clear that $g^3=g$. Therefore, $R$ is a DT ring.
\end{proof}

For our next citation, mimicking \cite{cui}, we just remember that a ring $R$ is said to be {\it 2-UJ}, provided the square of each unit is a sum of an idempotent and an element from the Jacobson radical.

\begin{theorem}\label{maj3}
Let $R$ be a ring. Then, the following three conditions are equivalent:

(1) $R$ is a DT ring.

(2) $R$ is a semi-tripotent ring.

(3) For every $a \in R$, there exist $e \in \operatorname{Id}(R)$ and $j \in J(R)$ such that $a^2 = e + j$.
\end{theorem}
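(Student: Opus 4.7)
Since $(1) \Leftrightarrow (2)$ is Corollary~\ref{gdt iff semi tripotent}, the plan is to prove $(2) \Rightarrow (3)$ directly and tackle $(3) \Rightarrow (2)$ via a known criterion. For $(2) \Rightarrow (3)$, I write $a = t + j$ with $t^3 = t$ and $j \in J(R)$, and expand $a^2 = t^2 + (tj + jt + j^2)$; the term $t^2$ is idempotent because $(t^2)^2 = t^4 = t \cdot t^3 = t^2$, while the bracketed summand lies in $J(R)$ since $J(R)$ is an ideal.

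For $(3) \Rightarrow (2)$, I plan to invoke the characterization of semi-tripotent rings from \cite[Theorem 3.5]{kyzr}: $R$ is semi-tripotent iff idempotents lift modulo $J(R)$ and $R/J(R)$ is tripotent. Idempotent lifting comes for free under (3): for an idempotent $\bar a \in R/J(R)$ with any preimage $a \in R$, writing $a^2 = e + j$ via (3) gives $\bar e = \bar a^2 = \bar a$, so the idempotent $e \in R$ is the desired lift. To prepare the tripotent condition in $\bar R = R/J(R)$, I compute $ae = a(a^2 - j) = a^3 - aj$ and $ea = a^3 - ja$, which yield $ae - ea \in J(R)$; hence each $\bar a^2$ is a \emph{central} idempotent in $\bar R$.

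The main obstacle is then establishing $\bar a^3 = \bar a$ uniformly in $\bar R$, for which I plan a primitive-ring analysis. Since $\bar R$ is semiprimitive, it embeds into a product of its primitive quotients, each of which inherits the identity ``every square is a central idempotent''. If such a quotient $T$ is a division ring, then every nonzero $d \in T$ satisfies $d^2 = 1$ (the other option $d^2 = 0$ being impossible in a division ring), giving $(d-1)(d+1) = 0$ and hence $d \in \{1, -1\}$, so that $T \in \{\mathbb{F}_2, \mathbb{F}_3\}$. Otherwise, by the Jacobson density theorem, $T$ surjects onto $M_2(D)$ for some division ring $D$; but the matrix unit $E_{11} \in M_2(D)$ is an idempotent that fails to be central (as $E_{11} E_{12} = E_{12} \neq 0 = E_{12} E_{11}$), contradicting the inherited identity. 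Hence every primitive quotient of $\bar R$ is $\mathbb{F}_2$ or $\mathbb{F}_3$, so $\bar R$ is a subdirect product of copies of these fields; since $x^3 = x$ holds in both, it holds in $\bar R$, which completes $(3) \Rightarrow (2)$.
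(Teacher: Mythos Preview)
Your argument for $(3) \Rightarrow (2)$ contains a genuine gap at the centrality step. You write $a^2 = e + j$ and compute $ae - ea = ja - aj \in J(R)$, then conclude that $\bar a^{\,2}$ is central in $\bar R$. But $e$ here is the specific idempotent attached to $a$, with $\bar e = \bar a^{\,2}$; so all you have actually shown is that $\bar a$ commutes with $\bar a^{\,2}$, which is a triviality of associativity and says nothing about commutation of $\bar a^{\,2}$ with an arbitrary element of $\bar R$. The density step then collapses: you invoke the non-centrality of $E_{11}$ in $M_2(D)$ to obtain a contradiction, yet the only identity you have legitimately transferred to $M_2(D)$ is $x^4 = x^2$, and $E_{11}$ satisfies this perfectly well.

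The repair is straightforward: drop the centrality claim and exhibit directly an element of $M_2(D)$ violating $x^4 = x^2$. For instance $x = \left(\begin{smallmatrix}1&1\\1&0\end{smallmatrix}\right)$ gives $x^2 = \left(\begin{smallmatrix}2&1\\1&1\end{smallmatrix}\right)$ and $x^4 = \left(\begin{smallmatrix}5&3\\3&2\end{smallmatrix}\right)$, and comparing the $(2,2)$ entries forces $1 = 0$ in $D$. With this correction your primitive-quotient analysis does go through and yields $\bar R$ tripotent, which together with your (correct) idempotent-lifting argument completes $(3) \Rightarrow (2)$. For comparison, the paper takes a different route for this implication: from $x^4 = x^2$ it deduces that $\bar R$ is strongly $\pi$-regular and hence clean, observes that $u^2 - u^4 \in J(R)$ for units $u$ makes $R$ (and thus $\bar R$) a $2$-UJ ring, and then quotes \cite[Theorem~3.3]{cui} to conclude that $\bar R$ is semi-tripotent. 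Your subdirect-product argument, once patched, is more self-contained in that it avoids the external $2$-UJ machinery.
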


\begin{proof}
$(1) \Leftrightarrow (2)$: It follows from Theorem \ref{avali T GDT}.

$(2) \Rightarrow  (3)$: It is evident.

$(3) \Rightarrow  (2)$: It suffices to show that, for every $a \in R$, $a - a^3 \in J(R)$ and idempotents lift modulo $J(R)$. Certainly, for each $a \in R$, we have $a^2 - a^4 \in J(R)$.

Setting $\overline{R} := R/J(R)$, for every $x \in \overline{R}$, we see that $x^2 = x^4$, so that $\overline{R}$ is a strongly $\pi$-regular ring. Now, \cite[Theorem 1]{nstr} tells us that $\overline{R}$ is strongly clean.

Moreover, for each $u \in U(R)$, we write $u^2 - u^4 \in J(R)$. Since $u \in U(R)$, it follows that $1 - u^2 \in J(R)$, which shows that $R$ is a 2-UJ ring. But, via \cite[Lemma 2.5]{cui}, $\overline{R}$ is also a 2-UJ ring, and thus again from \cite[Theorem 3.3]{cui}, $\overline{R}$ is semi-tripotent. This means that, for every $a \in R$, $a - a^3 \in J(R)$.

Now, suppose $a - a^2 \in J(R)$. By assumption, there is $e \in \operatorname{Id}(R)$ such that $a^2 - e \in J(R)$, which insures that
\[
a - e = (a - a^2) - (a^2 - e) \in J(R).
\]
Consequently, idempotents lift modulo $J(R)$, completing the arguments.
\end{proof}

\begin{remark}
The previous theorem quite naturally raises the question: if, for any $a \in R$, there exists $e \in \operatorname{Tr}(R)$ such that $a^3 - e \in J(R)$, is $R$ necessarily a DT ring? Unfortunately, the answer is negative, because a plain verification gives that $\mathbb{Z}_7$ satisfies this property, but however it is {\it not} a DT ring.

Analogically, what can be said about $R$ if, for any $a \in R$, there exists $e \in \operatorname{Id}(R)$ such that $a^4 - e \in J(R)$? Again, the answer is negative, as a plain check shows that $\mathbb{Z}_5$ satisfies this condition, but it is {\it not} a DT ring.
\end{remark}

In this aspect, it was shown in \cite[Theorem 3.5]{kyzr} that a ring $R$ is semi-tripotent if, and only if, for each $a \in R$, $a = ev + j$, where $j \in J(R)$, $e^2 = e \in R$ and $v^2 = 1$ if, and only if, for each $a \in R$, $a = ve + j$, where $j \in J(R)$, $e^2 = e \in R$ and $v^2 = 1$.

\medskip

This quite logically poses the question: if we replace the product with a sum in the above relations, does $R$ remain semi-tripotent? In the next main theorem, we attempt to answer this question in the affirmative. 

\medskip

And so, we close our work with the following.

\begin{theorem}\label{maj4}
Let $R$ be a ring. Then, the following three conditions are equivalent:

(1) $R$ is a DT ring.

(2) $R$ is a semi-tripotent ring.

(3) For each $a \in R$, $a = e+v+j$, where $j \in J(R)$, $e^2 = e \in R$, $v^2 = 1$ and $ev = ve$.
\end{theorem}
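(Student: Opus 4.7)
The plan is to deduce (1)$\Leftrightarrow$(2) from Corollary \ref{gdt iff semi tripotent}, to prove (2)$\Rightarrow$(3) by an explicit algebraic decomposition of a tripotent, and to tackle the hard direction (3)$\Rightarrow$(2) in two phases: a cleanness observation forcing idempotents to lift modulo $J(R)$, followed by a polynomial-identity argument modulo $J(R)$ which shows that every element of $R/J(R)$ is tripotent.

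For (2)$\Rightarrow$(3), given a semi-tripotent representation $a = t + j$ with $t^3 = t$ and $j \in J(R)$, I take $e := 1 - t^2$ and $v := t + t^2 - 1$. Using $t^3 = t$ (so $t^4 = t^2$), short computations yield $e^2 = e$, $v^2 = 1$, $ev = ve = t^2 - 1$, and $e + v = t$; hence $a = e + v + j$ has the required shape.

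For (3)$\Rightarrow$(2), the cleanness phase is brief: for $a = e + v + j$ as in (3), the identity $(a-e)^2 = (v+j)^2 = 1 + (vj + jv + j^2) \in 1 + J(R) \subseteq U(R)$ together with $a - e$ commuting with its own square forces $a - e \in U(R)$, so $a = e + (a-e)$ is a clean representation; hence $R$ is clean, and consequently idempotents lift modulo $J(R)$ (every clean ring is exchange, by Nicholson). Next I pass to $\bar R := R/J(R)$: since $ev = ve$, a direct check gives $[v,a] = [v,j] \in J(R)$, so $\bar v$ commutes with $\bar a$ while $\bar e = \bar a - \bar v$ remains idempotent. Expanding $(\bar a - \bar v)^2 = \bar a - \bar v$ with $\bar v^2 = 1$ produces $\bar v(2\bar a - 1) = \bar a^2 - \bar a + 1$; squaring both sides (using commutativity and $\bar v^2 = 1$) gives $(2\bar a - 1)^2 = (\bar a^2 - \bar a + 1)^2$, which simplifies to the polynomial identity $\bar a(\bar a - 1)(\bar a + 1)(\bar a - 2) = 0$ valid for every $\bar a \in \bar R$.

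Since $\bar R$ is Jacobson semiprimitive and satisfies $P(x) = x(x-1)(x+1)(x-2)$, subdirect decomposition into primitive rings combined with the Jacobson density theorem forces each primitive factor to be a matrix ring over a division ring satisfying $P$; matrix rings of size $\geq 2$ fail (e.g.\ a nilpotent Jordan block witnesses $P \neq 0$), and division rings satisfying $P$ must have every element in $\{0,1,-1,2\}$, so by inspection can only be $\mathbb{F}_2$ or $\mathbb{F}_3$. Both are tripotent, so $\bar a^3 = \bar a$ throughout $\bar R$, i.e.\ $a^3 - a \in J(R)$ for every $a \in R$. Combined with the idempotent lifting from the cleanness step, \cite[Theorem 3.5]{kyzr} yields that $R$ is semi-tripotent.

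The principal obstacle is the structural step isolating $\mathbb{F}_2$ and $\mathbb{F}_3$. Elementary shifts $\bar a \mapsto \bar a \pm 1$ in $P(\bar a) = 0$ readily produce auxiliary identities such as $4(\bar a^3 - \bar a) = 0$ and $\bar a(\bar a-1)(\bar a-2)(\bar a-3) = 0$, but cancelling the stubborn factor of $4$ without the density/Kaplansky input appears to require further, carefully chosen applications of (3) to auxiliary elements like $2\bar a$ or $\bar a^2$; I expect the density-theoretic route sketched above to be the cleanest.
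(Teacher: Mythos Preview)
Your argument is correct and complete, modulo one small repair. In the density step you rule out $M_2(D)$ by saying ``a nilpotent Jordan block witnesses $P\neq 0$''; but for $N^2=0$ one computes $P(N)=N(N-1)(N+1)(N-2)=-N^2+2N=2N$, which vanishes when $\operatorname{char}D=2$. The fix is easy: take instead $A\in M_2(D)$ with $A^2+A+1=0$ (such $A$ exists over any $D$, e.g.\ the companion matrix). In characteristic $2$ one has $P(x)=x^2(x+1)^2=(x^2+x)^2$, so $P(A)=(A^2+A)^2=1\neq 0$; in all other characteristics your nilpotent witness already works. With this, each primitive factor of $\bar R$ is a division ring whose elements lie in $\{0,1,-1,2\}$, hence is $\mathbb{F}_2$ or $\mathbb{F}_3$, and $\bar a^3=\bar a$ follows.

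Your route for $(3)\Rightarrow(2)$ is genuinely different from the paper's. After the same cleanness observation, the paper applies the decomposition (3) to the single element $3$ to force $6\in J(R)$, then splits $\bar R\cong R_1\times R_2$ with $2=0$ in $R_1$ and $3=0$ in $R_2$ via the Chinese Remainder Theorem; in each factor it checks directly from $u=e+v$ (using $ev=ve$ and the characteristic) that every unit satisfies $u^2=1$, so $\bar R$ is a clean $2$-UJ ring, and \cite[Theorem~3.3]{cui} finishes. You instead extract the uniform polynomial identity $\bar a(\bar a-1)(\bar a+1)(\bar a-2)=0$ and invoke the Jacobson--Kaplansky structure theory to obtain tripotency of $\bar R$ directly. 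The paper's path is more elementary but leans on the external $2$-UJ result; yours trades that for classical PI/density machinery and has the pleasant byproduct of making the governing identity explicit.
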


\begin{proof}
$(1) \Leftrightarrow (2)$: It follows directly from Theorem \ref{avali T GDT}.

$(2) \Rightarrow  (3)$: This follows at once since, for every $x \in Tr(R)$, we can write $x = (1-x^2) + (x-(1-x^2))$, where $1-x^2 \in Id(R)$ and $(x-(1-x^2))^2=1$.

$(3) \Rightarrow (2)$: Since $U(R)+J(R) = U(R)$, it follows that $R$ is a clean ring, and thus idempotents lift modulo $J(R)$. It remains only to prove that, for every $a \in R$, $a-a^3 \in J(R)$. Indeed, we first show $6 \in J(R)$. Writing $3 = e+v+j$, where $j \in J(R)$, $e^2 = e \in R$ and $v^2 = 1$, we then have that
\[
9 \equiv e+2ev+1 \equiv (3-v)+2(3-v)v+1 \equiv 1+5v \pmod{J(R)},
\]
which guarantees $7 \equiv 5v \pmod{J(R)}$ and, consequently, $24 \in J(R)$, whence $6 \in J(R)$.

Set $\overline{R} := R/J(R)$. Now, by the Chinese Remainder Theorem, one may writes that $\overline{R} \cong R_1 \times R_2$ where $R_1 := \overline{R}/2\overline{R}$ and $R_2 := \overline{R}/3\overline{R}$.

Consider $R_1$, where $2=0$: For $u \in U(R_1)$, by assumption $u = e+v$ with $e^2=e$ and $v^2=1$. Then, $$u^2 = 1+e=1-e \in U(R_1) \cap \operatorname{Id}(R_1) = \{1\},$$ showing $R_1$ is a 2-UJ ring.

Consider $R_2$, where $3=0$: For $u \in U(R_2)$, $u = e+v$ implies $u^3 = e+v = u$, hence $u^2=1$, so $R_2$ is too a 2-UJ ring.

Thus, $\overline{R}$ is simultaneously 2-UJ and clean, and bearing in mind \cite[Theorem 3.3]{cui}, we are able to conclude that $\overline{R}$ is semi-tripotent. Therefore, for every $a \in R$, $a-a^3 \in J(R)$, as needed.
\end{proof}

\vskip1.0pc

\end{document}